\numberwithin{equation}{section}
 \def\qed{\hfill$\Box$\medskip}
 \newtheorem{theorem}{Theorem}[section]
 \newtheorem{lemma}[theorem]{Lemma}
 \newtheorem{proposition}[theorem]{Proposition}
 \newtheorem{remark}[theorem]{Remark}
 \def\<{\langle}\def\>{\rangle}
 \def\proof{\noindent{\bf Proof.~}}
 \def\beqlb{\begin{eqnarray}}\def\eeqlb{\end{eqnarray}}
 \def\beqnn{\begin{eqnarray*}}\def\eeqnn{\end{eqnarray*}}
 \def\<{\langle}\def\>{\rangle}
 \def\proof{\noindent{\bf Proof.~}}
 \def\beqlb{\begin{eqnarray}}\def\eeqlb{\end{eqnarray}}
 \def\beqnn{\begin{eqnarray*}}\def\eeqnn{\end{eqnarray*}}
\begin{document}

\bigskip\bigskip

\centerline{\LARGE\bf A note on the critical barrier for the survival of $\alpha-$stable}
\medskip

\centerline{\LARGE\bf branching random walk with absorption }
\medskip
\centerline{
}
\medskip

\bigskip

\centerline{Jingning Liu and Mei
Zhang\,\footnote{Corresponding author, meizhang@bnu.edu.cn }}

\bigskip\bigskip

\centerline{School of Mathematical Sciences  }
 \centerline{Laboratory of Mathematics and Complex Systems }
 \centerline{ Beijing Normal University }

 \centerline{Beijing 100875, People's Republic of China}

 \centerline{E-mails:\;{\tt
liujingning@mail.bnu.edu.cn
  and
meizhang@bnu.edu.cn}}

\medskip
{\narrower{\narrower

\noindent{\textbf{\it Abstract.}} We consider a branching random walk with an absorbing barrier, where the step of the associated one-dimensional random walk is in the domain of attraction of an $\alpha$-stable law with $1<\alpha<2$. We shall prove that there is a barrier $an^{\frac{1}{1+\alpha}}$ and a critical value $a_\alpha$ such that if $a<a_\alpha$, then the process dies; if $a>a_\alpha$, then the process survives. The results generalize previous results in literature  for the case $\alpha=2$.

\medskip

\noindent{\textbf{\it Keywords.} branching random walk, $\alpha-$stable spine, absorption, critical barrier.}

\bigskip

\medskip

{\narrower

\section{Introduction}

We consider a discrete-time one-dimensional branching random walk. It starts with an initial ancestor particle located at the origin. At time $1$, the particle dies, producing a certain number of new particles. These new particles are positioned according to the distribution of the point process $\Theta$. At time $2$, the above particles die, producing  new particles positioned (with respect to the birth place) according to  $\Theta$, and the process goes on with the same mechanism. We assume the particles produce new particles independently of each other at the same generation and of everything up to that generation. This system can be seen as a branching tree $\mathcal{T}$ with the origin as the root.

For each vertex $x$ on $\mathcal{T}$, we denote its position by $V(x)$. The family of the random variables $(V(x))$ is usually referred to as a branching random walk (Biggins~\cite{bi}).

Throughout the paper, we assume: 
\begin{align}\label{C:1.1}
\mathbb{E}\Big(\sum_{|x|=1}1\Big)>1, \;\; \mathbb{E}\Big(\sum_{|x|=1}e^{-V(x)}\Big)=1,\;\; \mathbb{E}\Big(\sum_{|x|=1}V(x)e^{-V(x)}\Big)=0,
\end{align}
where $|x|$ denotes the generation of $x$. This assumption is referred to in the literature as the boundary case; see for example Biggins and Kyprianou \cite{BK}. Every branching random walk satisfying certain mild integrability assumptions can be reduced to this case by some renormalization; see Jaffuel \cite{j} for more details. Note that \eqref{C:1.1} implies  $\mathcal{T} $ is a super-critical Galton-Watson tree.

Denote $\mathbb{N}=\{0,1,2,\cdots\}$ and $\mathbb{N^*}=\{1,2,\cdots\}$.
We define a ``barrier" by a function $\varphi: \mathbb{N}\rightarrow\mathbb{R}$  and consider  the branching random walk with absorption: On $\mathcal{T}$, all the individuals $x$ such that $V(x)>\varphi(|x|)$, i.e. born above the barrier, are immediately removed and do not reproduce.

A natural question is whether the process survives or not.  Kesten \cite{Ke}, Derrida and Simon \cite{DeS1,DeS2}, Harris J. and Harris S. \cite{DH} have studied the continuous analog of this process, the branching Brownian motion with absorption. Biggins et al~\cite{B7}  solved the corresponding question on the linear barriers. Under certain conditions (see (\ref{C:1.1})--(\ref{C:1.3})), Jaffuel \cite{j} refined above result by considering a more general barrier.  He found a barrier $an^\frac{1}{3}$ and a critical value $\hat{a}$: the process dies when $a<\hat{a}$ and survives when $a>\hat{a}$.

Before stating the results in literatures, we introduce some notation. We denote by $   u_i$ the ancestor of $   u$ in generation $i$ and $\mathcal{T}_n:=\{   u\in\mathcal{T}:|   u|=n\}$ the population at time $n$. And we say $x<y$ iff individual $x$ is an ancestor of individual $y$.
Define an infinite path $   u$ through $\mathcal{T}$ as a sequence of individuals $   u=(   u_i)_{i\in\mathbb{N}}$ such that
$$\forall\,i\in\mathbb{N},\;|   u_i|=i,\; \;   u_i<   u_{i+1},
$$
and denote their collection by $\mathcal{T}_\infty$. For $A\subset \mathcal{T}$, $\# A $ denotes the number of individuals in $A$.

\begin{theorem}\label{T:1.2}{\rm{\textbf{(Biggins et. al. \cite{B7})}}.}
Under condition \eqref{C:1.1}, we have
$$\mathbb{P}\big(\exists\,   u\in\mathcal{T}_\infty,\,\forall\,i\geq1,\,V(   u_i)\leq i\varepsilon\big)
\begin{cases}
=0,& \text{if $\varepsilon\leq0$},\\
>0,& \text{if $\varepsilon>0$}.
\end{cases}$$
\end{theorem}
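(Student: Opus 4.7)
The plan is to reduce both directions to a one-dimensional random-walk problem via the many-to-one identity. Under \eqref{C:1.1}, the centered random walk $(S_n)_{n\geq 0}$ obtained by tilting the step distribution by $e^{-V(x)}$ has $S_0=0$, $\mathbb{E}[S_1]=0$, and for every nonnegative measurable $F$,
\[
\mathbb{E}\Big[\sum_{|x|=n} F\bigl(V(x_1), \ldots, V(x_n)\bigr)\Big] = \mathbb{E}\bigl[e^{S_n}\, F(S_1, \ldots, S_n)\bigr].
\]
Together with the Biggins--Kyprianou change of measure $d\tilde{\mathbb{P}}/d\mathbb{P}|_{\mathcal{F}_n} = W_n := \sum_{|x|=n} e^{-V(x)}$, under which a distinguished spine performs the walk $(S_n)$, this identity drives both halves of the argument.

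For the negative direction ($\varepsilon \leq 0$), Markov's inequality combined with the many-to-one identity yields, for $A_n := \{\exists\, x\in\mathcal{T}_n : V(x_i) \leq i\varepsilon,\ \forall\, 1 \leq i \leq n\}$,
\[
\mathbb{P}(A_n) \leq \mathbb{E}\bigl[e^{S_n}\, \mathbf{1}\{S_i \leq i\varepsilon,\ \forall\, i \leq n\}\bigr].
\]
When $\varepsilon < 0$, the estimate $e^{S_n} \leq e^{n\varepsilon}$ forces the right-hand side to decay exponentially. When $\varepsilon = 0$, one uses $e^{S_n}\leq 1$ together with $\mathbb{P}(S_i \leq 0,\ \forall\, i \leq n)\to 0$, a standard consequence of fluctuation theory for the centered, non-degenerate walk $(S_n)$. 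Since the $A_n$ are decreasing and contain $\{\exists\, u \in \mathcal{T}_\infty : V(u_i) \leq i\varepsilon,\ \forall\, i\geq 1\}$, the probability of an infinite absorbed ray is zero.

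For the positive direction ($\varepsilon > 0$), I would first establish
\[
p_\varepsilon := \mathbb{P}\bigl(S_n \leq n\varepsilon,\ \forall\, n \geq 1\bigr) > 0,
\]
which follows from standard random-walk ruin theory applied to $(S_n - n\varepsilon)$, a walk with strictly negative drift $-\varepsilon$. Setting $Z_n := \sum_{|x|=n} e^{-V(x)}\mathbf{1}\{V(x_i) \leq i\varepsilon,\ \forall\, i \leq n\}$, many-to-one gives $\mathbb{E}[Z_n] = \mathbb{P}(S_i \leq i\varepsilon,\ \forall\, i \leq n) \geq p_\varepsilon$. I would then estimate $\mathbb{E}[Z_n^2]$ uniformly in $n$ by splitting pairs $(x,y)$ at generation $n$ according to the generation of their most recent common ancestor and applying the spine decomposition, exploiting the strictly positive slope $\varepsilon$ to produce summable geometric factors from off-spine branches. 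Paley--Zygmund then furnishes $\inf_n\mathbb{P}(Z_n > 0) > 0$. Finally, since each vertex has a.s.\ finitely many offspring, K\"onig's lemma applied to the subtree of individuals surviving the barrier identifies $\{\exists\, u \in \mathcal{T}_\infty,\, V(u_i) \leq i\varepsilon,\ \forall\, i\}$ with the decreasing intersection $\bigcap_n\{Z_n > 0\}$, so its probability is $\lim_n\mathbb{P}(Z_n > 0) > 0$.

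The main obstacle is the uniform second-moment bound for $Z_n$, because \eqref{C:1.1} supplies only $\mathbb{E}[W_n]=1$ without any explicit higher integrability of the point process $\Theta$. The argument must lean on the positive slope $\varepsilon$: after the spine decomposition, each off-spine branch sprouting at generation $k$ contributes an amount bounded by the probability that another centered walk stays below a line of slope $\varepsilon$ starting from a random offset, weighted by $e^{-V(\text{branch point})}$, and these weights summed over $k$ yield a finite constant independent of $n$. This geometric control is the delicate step and is precisely what distinguishes the supercritical regime $\varepsilon>0$ from the critical boundary case $\varepsilon=0$.
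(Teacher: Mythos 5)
The paper does not actually prove Theorem~\ref{T:1.2}; it is quoted from Biggins, Lubachevsky, Shwartz and Weiss \cite{B7}, so there is no internal proof against which to compare. On its own terms, your $\varepsilon\le 0$ half is correct: Markov plus many-to-one gives $\mathbb{P}(A_n)\le e^{n\varepsilon}\to 0$ for $\varepsilon<0$, and for $\varepsilon=0$ the bound $\mathbb{P}(A_n)\le\mathbb{P}(S_i\le 0,\ i\le n)\to 0$ follows because $(S_n)$ is a non-degenerate centered walk (non-degeneracy being forced by the first two equalities in \eqref{C:1.1}).

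The $\varepsilon>0$ half has a genuine gap, and it is exactly the step you flag as ``delicate''. With $Z_n=\sum_{|x|=n}e^{-V(x)}\mathbf{1}\{V(x_i)\le i\varepsilon,\forall i\le n\}$, the diagonal contribution to $\mathbb{E}[Z_n^2]$ is, by many-to-one,
\[
\mathbb{E}\Big[\sum_{|x|=n}e^{-2V(x)}\mathbf{1}\{V(x_i)\le i\varepsilon,\forall i\le n\}\Big]
=\mathbb{E}\big[e^{-S_n}\,\mathbf{1}\{S_i\le i\varepsilon,\ i\le n\}\big],
\]
and this quantity is not controlled by \eqref{C:1.1}: already at $n=1$ it equals $\mathbb{E}\big[\sum_{|u|=1}e^{-2V(u)}\mathbf{1}\{V(u)\le\varepsilon\}\big]$, which may be infinite, since \eqref{C:1.1} constrains $\mathbb{E}[\sum e^{-V(u)}]$ but says nothing about $\mathbb{E}[\sum e^{-2V(u)}]$, i.e.\ about the lower tail of the point process. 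The barrier $S_i\le i\varepsilon$ is one-sided from above and does nothing to tame large negative excursions of $S$, so the ``positive slope'' cannot rescue the second moment; the same factor $e^{-2V(x\wedge y)}$ appears in every off-diagonal term at the branch point. Thus Paley--Zygmund, as written, does not go through under \eqref{C:1.1} alone.

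A first-moment argument suffices and is, to my understanding, essentially what \cite{B7} does. Choose $\delta\in(0,\varepsilon)$ small enough that $\delta-I(\delta)>0$, where $I$ is the Cram\'er rate function of $X$; this is possible because $I(0)=I'(0)=0$. By many-to-one, the expected number of particles $x$ in generation $N$ with $V(x_i)\le i\varepsilon$ for all $i\le N$ and $V(x_N)\le N\delta$ is $\mathbb{E}[e^{S_N}\mathbf{1}\{S_i\le i\varepsilon,\ S_N\le N\delta\}]\ge e^{(\delta-I(\delta))N+o(N)}$, which exceeds $1$ for $N$ large (the path constraint is asymptotically harmless because the optimal path has slope $\delta<\varepsilon$). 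Iterating in blocks of $N$ generations and restarting from the worst admissible position $N\delta$ (the number of good descendants being stochastically monotone in the starting height) embeds a supercritical Galton--Watson process in the absorbed population, giving positive survival probability without any second-moment control. If you want to keep your Paley--Zygmund route, you would need to insert a truncation of the offspring law and of the displacements before applying it.
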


To present the result in Jaffuel~\cite{j}, we need more conditions.

\begin{align}
&(\romannumeral1)\quad \exists\, \delta>0,\mathbb{E}(\#\mathcal{T}_1^{1+\delta})<+\infty,\label{C:1.21} \\
& (\romannumeral2)\quad \exists\, \varrho>0,\,\mathbb{E}\Big(\sum_{|   u|=1}e^{-(1+\varrho)V{(   u)}}\Big)<+\infty, \label{C:1.2}\\
&(\romannumeral3)\quad \sigma^2:=\mathbb{E}\Big(\sum_{|   u|=1}V{(   u)}^2e^{-V{(   u)}}\Big)<+\infty.\label{C:1.3}
\end{align}

Jaffuel~\cite{j} refined Theorem~\ref{T:1.2} by replacing the linear barrier $i\varepsilon$ with a barrier $\varphi(i):=ai^{1/3}$.

\begin{theorem}\label{Jaffuel} {\rm{\textbf{(Jaffuel \cite{j})}}.}
Let $\hat{a}=\frac{3}{2}(3\pi^2\sigma^2)^{1/3}$. Assuming \eqref{C:1.1}-- \eqref{C:1.3}, we have

  \beqnn \mathbb{P}\big(\exists\,   u\in\mathcal{T}_\infty,\,\forall\,i\geq1,\,V(   u_i)\leq ai^{1/3}\big)
\begin{cases}
>0,& \text{if $a>\hat{a}$},\\
=0,& \text{if $a<\hat{a}.$}
\end{cases}\eeqnn
\end{theorem}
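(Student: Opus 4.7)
The plan is to reduce the barrier-survival question to a sharp estimate for a centered random walk constrained below the curve $ai^{1/3}$, and then handle the two directions by a first-moment bound and a truncated second-moment argument respectively. By the boundary normalisation \eqref{C:1.1}, the standard many-to-one formula gives, for any measurable $F\ge 0$,
$$
\mathbb{E}\Big[\sum_{|u|=n}F(V(u_1),\dots,V(u_n))\Big]=\mathbb{E}\big[e^{S_n}F(S_1,\dots,S_n)\big],
$$
where $(S_i)$ is a centered random walk with $\mathrm{Var}(S_1)=\sigma^2$ by \eqref{C:1.3}. Taking $F$ to be the indicator of the barrier-survival event yields
$$
\mathbb{E}\Big[\#\{u\in\mathcal T_n:V(u_i)\le ai^{1/3},\,i\le n\}\Big]=\mathbb{E}\big[e^{S_n}\mathbf{1}_{\{S_i\le ai^{1/3},\,i\le n\}}\big].
$$

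The analytic heart of the proof is the sharp estimate
$$
\mathbb{P}\big(S_i\le ai^{1/3},\;1\le i\le n\big)=\exp\!\big(-(1+o(1))\,c(a)\,n^{1/3}\big),\qquad c(a)=\frac{K\sigma^{2}}{a^{2}},
$$
where $K$ is the constant that makes the balance $a=c(a)$, i.e.\ $a^{3}=K\sigma^{2}$, occur exactly at $a=\hat a$. I would obtain this by first using the Donsker-Skorohod invariance principle (justified by \eqref{C:1.21} and \eqref{C:1.2}) to reduce to the Brownian problem on the self-similar curve $at^{1/3}$, and then analysing the latter via the spectral theory of its generator. The scaling $t\mapsto\lambda^{6}t$, $x\mapsto\lambda^{2}x$ preserves the curve $at^{1/3}$ after rescaling~$a$, so the exponential rate depends on $a$ and $\sigma$ only through $\sigma^{2}/a^{2}$; the self-similar substitution $x=t^{1/3}y$ then converts the heat equation with moving Dirichlet boundary on $\{x<t^{1/3}\}$ into the Airy equation, whose principal eigenvalue is determined by the first negative zero of $\mathrm{Ai}$. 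A local-CLT refinement that is uniform in the endpoint $S_n$ is also required for the second-moment step below.

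With this estimate in hand, the subcritical direction $a<\hat a$ is immediate: on the survival event $e^{S_n}\le e^{an^{1/3}}$, so the first-moment computation gives
$$
\mathbb{E}\big[\#\{\cdots\}\big]\le\exp\!\big((a-c(a))\,n^{1/3}(1+o(1))\big)\longrightarrow 0,
$$
since $c(a)>a$ for $a<\hat a$, and Markov plus Borel-Cantelli along a geometric subsequence rule out any infinite ray below the barrier. For the supercritical direction $a>\hat a$ I would fix $a_{1}\in(\hat a,a)$, count particles $u\in\mathcal T_n$ with $V(u_{n})$ in a bounded window $[0,1]$ and $V(u_{i})\le a_{1}i^{1/3}$ for all $i\le n$, and show via the refined estimate that this count has expectation tending to infinity. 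The second moment is then split along the most recent common ancestor of a pair $(u,v)$; each contribution is handled by the same Airy-type estimate applied to two conditioned walks that coincide until the split time and then evolve independently, with \eqref{C:1.2} absorbing the branching factor. Paley-Zygmund yields positive probability of many well-placed particles at generation $n$, and a Galton-Watson-style iteration (each good particle initiating a fresh barrier problem with slack $a-a_{1}$) upgrades this to survival below $ai^{1/3}$ with positive probability.

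The main obstacle is the sharp constrained random-walk estimate, both in pinning down the correct constant $K$ and in obtaining enough uniformity in the endpoint $S_{n}$: tracking all constants through the Airy computation is delicate, and the endpoint uniformity is needed so that the second-moment inequality closes with the same $K$. The subsidiary difficulty is controlling the contribution of pairs $(u,v)$ that split very early in the second moment; here the exponential-moment bound \eqref{C:1.2} and the $L^{1+\delta}$-bound \eqref{C:1.21} on the offspring law are what prevent it from blowing up.
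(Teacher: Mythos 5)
Your plan is structurally reasonable (many-to-one plus first moment for extinction, truncated second moment plus Paley--Zygmund iteration for survival), but the analytic heart of it is wrong, and the error is not a matter of tracking constants.

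The claimed estimate
\[
\mathbb{P}\big(S_i \le a i^{1/3},\ 1\le i\le n\big)=\exp\big(-(1+o(1))\,c(a)\,n^{1/3}\big)
\]
is false. The one-sided constraint below an increasing curve $a i^{1/3}$ is \emph{easier} than staying below $0$, so
\[
\mathbb{P}\big(S_i \le a i^{1/3},\ i\le n\big)\ \ge\ \mathbb{P}(S_i\le 0,\ i\le n)\ \asymp\ n^{-1/2},
\]
a polynomial decay, not exponential. (Equivalently, after the logarithmic time-change that turns Brownian motion into an Ornstein--Uhlenbeck process, the barrier $a s^{1/3}$ becomes $a e^{-u/6}\to 0$, so $\mathbb{P}(B_s\le a s^{1/3},\,s\le t)\sim c\,t^{-\lambda}$ for some $\lambda>0$.) Consequently your first-moment bound collapses: from the many-to-one identity you get $\mathbb{E}[\#\{\cdots\}]=\mathbb{E}[e^{S_n}\mathbf 1_{\{S_i\le a i^{1/3}\}}]$, and the crude replacement $e^{S_n}\le e^{an^{1/3}}$ yields $\lesssim e^{an^{1/3}}\,n^{-1/2}$, which diverges and proves nothing. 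The exponential $n^{1/3}$-rate appears only in the \emph{tilted} expectation $\mathbb{E}[e^{S_n}\mathbf 1_{\{\cdots\}}]$ itself, because the weight $e^{S_n}$ penalises the (dominant) trajectories that wander far below the curve; one cannot decouple the weight from the indicator the way you do.

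The framework you propose to establish the rate is also the wrong one. The critical constant $\hat a=\tfrac32(3\pi^2\sigma^2)^{1/3}$ carries a factor $\pi^2$, which is the signature of a Dirichlet eigenvalue of a finite interval, i.e.\ a \emph{two-sided} confinement estimate (Mogul'ski\u\i's small-deviation theorem, Theorem~\ref{T:2.1} here with $\alpha=2$ and $C_*=\pi^2\sigma^2/2$), not the first zero of the Airy function. Airy asymptotics govern the nearby-but-different problem of a nearly critical \emph{linear} barrier $\varepsilon n$ (A\"{\i}d\'ekon--Jaffuel); for the $n^{1/3}$ barrier the correct mechanism is: introduce an auxiliary lower killing curve $a i^{1/3}-h(i/n)\,n^{1/3}$, apply Mogul'ski\u\i\ to the tube $[f,g]$ with $g-f=h$ to get the rate $C_*\int_0^1 h(s)^{-2}\,ds$, and then optimise over the shape $h$. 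Two contributions must be handled: trajectories that survive the whole tube (your $K_1$-type term) and trajectories that exit through the lower barrier at some time $\rho n$ (the $K_2$-type term, which dominates), exactly as in Lemma~\ref{Lemma4.2} and Proposition~\ref{P:3.6}. The optimisation forces $h$ to solve the ODE \eqref{E:4.1}, and $\hat a$ arises as the threshold for this ODE to reach $0$ in finite time; for a constant tube width $b$ this reduces to minimising $b+3C_*/b^2$ over $b>0$, which gives $\hat a$. By contrast, your ansatz $c(a)=K\sigma^2/a^2$ ties the tube width to $a$ itself, which is not what the variational problem produces, and without the lower barrier there is no exponential rate to speak of. The same two-sided structure is needed on the survival side: your second-moment computation along the most recent common ancestor will not close unless the walks are confined in a tube, because otherwise the split-time contributions are governed by the same polynomial estimate.

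To repair the argument you would need to (i) replace the one-sided probability estimate by a two-sided Mogul'ski\u\i\ estimate on a tube with a tunable lower boundary $h$, (ii) in the extinction direction decompose according to the exit time through $h$, and (iii) in the survival direction use the enriched (bivariate) many-to-one of Lemma~\ref{L:2.2} together with a truncation of the offspring numbers to control the second moment, then iterate across geometric time blocks as in Section~3. This is precisely what the paper does for general $\alpha\in(1,2]$; Jaffuel's original proof is the $\alpha=2$ specialisation.
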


The aim of the present paper is  to replace condition \eqref{C:1.3} by
\begin{align}\label{C:1.4}
  \mathbb{E} \Big(\sum_{|x|=1}\mathbf{1}_{\{|V(x)|\geq y\}} e^{-V(x)}\Big)\sim \frac{c}{y^{\alpha}},\quad y\to +\infty,
\end{align}
where $c\in(0,\infty)$ and~$\alpha\in(1,2]$. Actually,  (\ref{C:1.4}) turns out to be
\begin{align}\label{C:1.4a}
  \mathbb{E} \Big(\sum_{|x|=1}\mathbf{1}_{\{V(x)\geq y\}} e^{-V(x)}\Big)\sim \frac{c}{y^{\alpha}},\quad y\to +\infty
\end{align}
under (\ref{C:1.2}). Now, if we  define a random variable $X$  by
\begin{align}\label{X}
\mathbb{P}(X\leq x)=\mathbb{E}\Big(\sum_{|   u|=1}1_{\{V(   u)\leq x\}}e^{-V(   u)}\Big),\quad x\in \mathbb{R},
\end{align}
 then we shall see from Lemma~\ref{L:2.1} and \eqref{C:1.4} that
$$
\mathbb{P}(X>x)\sim cx^{-\alpha},\;\;\mathbb{P}(X<-x)= o(x^{-\alpha}),\;\; x\to +\infty,
$$
which means that
$X$ is in the domain of attraction of a strictly $\alpha$-stable random variable $Y$  with characteristic function of the form
\begin{align}\label{Y}
G_{\alpha}(t)=\exp\{-c_0|t|^\alpha(1-i\frac{t}{|t|} \tan \frac{\pi\alpha}{2})\},\;c_0>0,\quad \alpha\in(1,2]. \end{align} Denote by $(Y_t,\,t\in[0,1])$ the strictly $\alpha$-stable L\'{e}vy process such that $Y_1$ has the same law as $Y$. Under \eqref{C:1.1} and \eqref{C:1.4}, we call $(V(x))$  a stable branching random walk. We mention that the convergence of derivative martingale and additive martingale for stable branching random walk  are studied in a recent paper \cite{HLZ}; The asymptotic behavior of the position of $N$-branching random walk with $\alpha$-stable spine has been studied in Mallein~\cite{Mal}.

  Theorem~\ref{T:1.1} and Proposition~\ref{P:1.3} are our main results.

\begin{theorem}\label{T:1.1}
Let $a_\alpha=(1 + \alpha^{-1})(\alpha(1+\alpha)C_*)^{\frac{1}{1+\alpha}}$, where \beqlb\label{C*}C_*=C_*(\alpha):=-\lim_{t\rightarrow\infty}\frac{1}{t}\log\mathbb{P}\big(|Y_s|\leq \frac{1}{2},\,s\leq t\big)\in(0,+\infty).\eeqlb
 Assume \eqref{C:1.1}--\eqref{C:1.2} and \eqref{C:1.4}. Then
\beqlb\mathbb{P}\big(\exists\,   u\in\mathcal{T}_\infty,\,\forall\,i\geq1,\,V(   u_i)\leq ai^{\frac{1}{1+\alpha}}\big)
  \begin{cases}
>0,& \text{if $a>a_\alpha$},\quad\mbox{\rm{(lower bound)}} \\
=0,& \text{if $a<a_\alpha$}, \,\quad\mbox{\rm{(upper bound)}}
\end{cases} \eeqlb

\end{theorem}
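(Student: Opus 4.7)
The plan is to transport Jaffuel's strategy for the $\alpha=2$ case to the stable setting, replacing the classical Brownian small-ball input by its $\alpha$-stable analogue, whose exponent is the constant $C_*$. Let $\varphi(i)=ai^{1/(1+\alpha)}$ and
\beqnn
 Z_n:=\#\{x\in\mathcal{T}_n:V(x_j)\le\varphi(j),\,\forall\,j\le n\}.
\eeqnn
Since survival of the absorbed tree forces $Z_n\ge1$ for every $n$, the upper bound will follow from $\mathbb{E}[Z_n]\to0$ by Markov's inequality. Both directions rest on the many-to-one identity for the boundary case,
\beqnn
 \mathbb{E}[Z_n]=\mathbb{E}\bigl[e^{S_n}\mathbf{1}_{\{S_j\le\varphi(j),\,\forall\,j\le n\}}\bigr],
\eeqnn
where $(S_n)$ is the centred random walk with step law $X$ from \eqref{X}; under \eqref{C:1.4} and Lemma~\ref{L:2.1}, $(S_n)$ lies in the domain of attraction of the strictly $\alpha$-stable L\'{e}vy process $Y$.

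For the upper bound I would decompose by the value of $S_n$: writing $S_n\in[\varphi(n)-k-1,\varphi(n)-k]$ for $k\ge0$ gives
\beqnn
 \mathbb{E}[Z_n]\le\sum_{k\ge0}e^{\varphi(n)-k}\,\mathbb{P}\bigl(S_j\le\varphi(j),\,\forall\,j\le n,\ S_n\in[\varphi(n)-k-1,\varphi(n)-k]\bigr).
\eeqnn
The dominant contribution comes from $k\asymp c\,n^{1/(1+\alpha)}$, so the walk tracks the barrier at separation of order $c\,t^{1/(1+\alpha)}$. A Mogul'skii-type small-deviation estimate for the stable walk in such a moving strip, transferred from the small-ball exponent $C_*$ of $Y$ by the Skorokhod invariance principle together with the stable scaling $Y_{\lambda t}\stackrel{d}{=}\lambda^{1/\alpha}Y_t$, should give probability of order $\exp\bigl(-\frac{C_*(1+\alpha)}{c^{\alpha}}n^{1/(1+\alpha)}(1+o(1))\bigr)$, consistently with the heuristic $\int_0^n C_*/(c\,t^{1/(1+\alpha)})^{\alpha}\,dt=\frac{C_*(1+\alpha)}{c^{\alpha}}n^{1/(1+\alpha)}$. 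Combining this with the factor $e^{(a-c)n^{1/(1+\alpha)}}$ one obtains $\mathbb{E}[Z_n]\le\exp\bigl((a-c-\frac{C_*(1+\alpha)}{c^{\alpha}})n^{1/(1+\alpha)}(1+o(1))\bigr)$, and minimising the bracket over $c>0$ produces the optimiser $c_*=(\alpha(1+\alpha)C_*)^{1/(1+\alpha)}$ with minimum $(1+\alpha^{-1})c_*=a_\alpha$; hence $\mathbb{E}[Z_n]\to0$ whenever $a<a_\alpha$.

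For the lower bound I would run a second-moment argument on the truncated count
\beqnn
 \widetilde Z_n:=\#\bigl\{x\in\mathcal{T}_n:V(x_j)\in[\gamma(j)-L,\gamma(j)],\,\forall\,j\le n\bigr\},\qquad\gamma(j):=(a-c_*)\,j^{1/(1+\alpha)},
\eeqnn
for $L$ a suitably large constant. Running the many-to-one computation just described as a lower bound gives $\mathbb{E}[\widetilde Z_n]\ge\exp\bigl((a-a_\alpha)n^{1/(1+\alpha)}(1+o(1))\bigr)\to\infty$ when $a>a_\alpha$. A many-to-two identity combined with a spinal decomposition, using the $(1+\delta)$-moment hypothesis \eqref{C:1.21} on the reproduction law, should then yield $\mathbb{E}[\widetilde Z_n^2]\le C(\mathbb{E}[\widetilde Z_n])^2$ by summing over the branching generation $k\le n$ the probability that two independent descendants both remain in the tube from time $k$ onwards. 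Paley--Zygmund then gives $\liminf_n\mathbb{P}(\widetilde Z_n\ge1)>0$, and a standard restart/branching argument (a surviving particle at generation $n$ has subtrees with the same chance of propagating) upgrades this to survival with positive probability for the original tree.

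The main technical obstacle is the Mogul'skii-type estimate in a moving tube of width $\asymp t^{1/(1+\alpha)}$, with the correct constant $C_*$ and enough uniformity in the entrance and exit points. For $\alpha=2$ one disposes of a Girsanov-style tilt which reduces everything to a constant-strip Brownian small-ball problem, but for $\alpha<2$ no such exponential-moment drift transformation is available; one has to combine the exact scaling of $Y$, a Skorokhod $J_1$ invariance principle for $(S_n)$ (with \eqref{C:1.2} controlling the large-jump contribution), local limit asymptotics for stable walks, and a piecewise patching on dyadic time blocks on which the barrier is approximately flat, in order to identify both the exponent $\frac{C_*(1+\alpha)}{c^{\alpha}}n^{1/(1+\alpha)}$ and the polynomial prefactors needed for the second-moment step.
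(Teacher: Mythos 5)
Your high-level plan (many-to-one plus a Mogul'ski\u{\i} small-deviation input, then first- and second-moment analyses) is the right skeleton and is broadly what the paper does, but two of your key steps do not go through as written. On the upper bound, you decompose $\mathbb{E}[Z_n]$ by the endpoint $S_n$ and then apply a two-sided (tube) small-deviation estimate to the event $\{S_j\le \varphi(j)\ \forall j\le n,\ S_n\approx(a-c)n^{1/(1+\alpha)}\}$. But this event only constrains the walk from above; the tube $\{(a-c)j^{1/(1+\alpha)}\le S_j\le\varphi(j)\}$ is a strict \emph{subset} of it, so the Mogul'ski\u{\i} estimate gives a lower bound on that probability, not the upper bound you need. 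Making this rigorous requires handling paths that dip well below the putative lower barrier and return, and for $\alpha<2$ you cannot dispose of a Girsanov tilt to do this. The paper circumvents the problem entirely by taking a union bound over the first exit time from an explicit tube $[at^{1/(1+\alpha)}-h(t)n^{1/(1+\alpha)},\,at^{1/(1+\alpha)}]$ (this is the content of Lemma~\ref{Lemma4.2} with its two exponents $K_1$ and $K_2$), and then shows via Proposition~\ref{P:3.6} that the optimal $h$ solves an ODE, with $h(0)>0$ and $h$ hitting zero at finite $t_{\max}$. Note that the union-bound route \emph{forces} $h(0)>0$; your implicit choice $h(t)=ct^{1/(1+\alpha)}$ has $h(0)=0$ and would give $K_2=0$, so it is not an alternative to the ODE argument but a genuinely different (and incomplete) decomposition.

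On the lower bound, the tube you propose, $V(x_j)\in[(a-c_*)j^{1/(1+\alpha)}-L,\ (a-c_*)j^{1/(1+\alpha)}]$ with $L$ a \emph{constant}, does not produce a growing first moment: the small-deviation cost of a constant-width-$L$ strip over $n$ steps is of order $\exp(-C_*L^{-\alpha}n)$, which swamps the gain $e^{(a-c_*)n^{1/(1+\alpha)}}$ since $n\gg n^{1/(1+\alpha)}$. The tube width must grow like $j^{1/(1+\alpha)}$; in the paper the surviving population is confined to $[(a-b)i^{1/(1+\alpha)},ai^{1/(1+\alpha)}]$. Moreover, the second-moment and ``restart'' steps you sketch are where the real technical work lies and are glossed over: the paper truncates the offspring number at $R_k=\lfloor e^{l_k^{1/c}}\rfloor$ (introducing $Z_k^{(k)}$) to make the Mogul'ski\u{\i} estimate valid for the enriched walk $(S,\upsilon)$ of Lemma~\ref{L:2.8}, then decomposes $\mathbb{E}[(Z_k^{(k)})^2]=\sum_j D_{k,j}^{(k)}$ by the branching generation and bounds each $D_{k,j}^{(k)}$ via Lemma~\ref{L:2.4}. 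The ``restart'' is not generic either: a surviving particle at generation $n$ sits near $(a-c_*)n^{1/(1+\alpha)}$, strictly below the barrier, so the subtree it generates does not see a barrier of the original shape; the paper instead runs a multi-scale block construction over generations $e^{\lambda k}$ (Proposition~\ref{P:1.3} and Lemma~\ref{L:4.1}), re-anchoring at a particle \emph{on} the barrier at the start of each block. As it stands your proposal identifies the correct critical constant $a_\alpha$ via the optimization $\min_c(c+(1+\alpha)C_*/c^\alpha)$, but both directions have gaps that the paper's more elaborate machinery is specifically designed to close.
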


 \begin{remark} We observe that if $\alpha=2$, then condition (\ref{C:1.2}) and (\ref{C:1.4}) reduce to (\ref{C:1.2}) and (\ref{C:1.3}). In this case, $C_*=C_*(2)=\frac{\pi^2\sigma^2}{2}$, which coincides with Theorem~\ref{Jaffuel}.
\end{remark}

 By taking derivative and discussing monotonicity of the function $$
 f(x)=x+\frac{ 1+\alpha }{x^\alpha}C_*,\quad x>0.$$
 we can see that $$
  f'(\frac{\alpha a_\alpha}{ 1+\alpha})=0,$$
 and $\min_x f(x)=  a_\alpha$.
   If $a>a_\alpha$,  then the equation $a=x+\frac{ 1+\alpha }{x^\alpha}C_*$ has two solutions in $x$. Let $r_a$ be the larger solution. Clearly, $r_a>\frac{\alpha a_\alpha}{ 1+\alpha }$.

\begin{proposition}\label{P:1.3}
For $a>a_\alpha$, $\varepsilon\in (0, r_a)$, $N\in \mathbb{N^*}$, define
 $$
B_N=\{\forall k\ge1, \#\{   u\in\mathcal{T}_{N^k}:\,\forall\,i\leq N^k,\,(a-r_a)i^{\frac{1}{1+\alpha}}\leq V(   u_i)\leq ai^{\frac{1}{1+\alpha}}\}\geq
\exp(\frac{1}{2}N^{\frac{k}{1+\alpha}}(r_a-\varepsilon))\}.
$$
Then for sufficiently large $N$,  $\mathbb{P}(B_N)>0$.

\end{proposition}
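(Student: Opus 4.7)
The plan is to carry out the Galton--Watson-type iteration along the renewal times $(N^k)_{k \geq 1}$ used by Jaffuel in the $\alpha = 2$ case (see \cite{j}), substituting the stable small-ball constant $C_*$ from \eqref{C*} for the Brownian value $\pi^2 \sigma^2/2$ throughout, and replacing his Brownian persistence arguments by analogous estimates for the associated random walk $(S_j)$ with $\alpha$-attracted step $X$ defined in \eqref{X}. Set $L_k := \exp(\tfrac{1}{2} N^{k/(1+\alpha)}(r_a - \varepsilon))$, call $u \in \mathcal{T}_{N^k}$ an \emph{$(N,k)$-particle} if $(a - r_a) i^{1/(1+\alpha)} \leq V(u_i) \leq a i^{1/(1+\alpha)}$ for every $i \leq N^k$, and let $\xi_k$ denote the number of such particles, so that $B_N = \bigcap_{k \geq 1} \{\xi_k \geq L_k\}$.

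\emph{Step 1 (one-block estimate).} For any fixed $(N,k-1)$-particle $u$ I would estimate the conditional distribution of the number of its $(N,k)$-descendants at generation $N^k$. By the branching Markov property this reduces to a BRW rooted at $V(u)$ run for $\ell = N^k - N^{k-1}$ steps; the many-to-one lemma rewrites the conditional first moment as
\[
\mathbb{E}\!\left[e^{S_\ell - V(u)}\, \mathbf{1}_{\{V(u) + S_j \in [(a-r_a)(N^{k-1}+j)^{1/(1+\alpha)},\, a(N^{k-1}+j)^{1/(1+\alpha)}]\,\forall\, j \leq \ell\}}\right].
\]
Stable scaling of \eqref{C*} yields an infinitesimal small-ball cost $C_* (r_a t^{1/(1+\alpha)})^{-\alpha}$ per unit time for confinement in a curved strip of half-width $\tfrac12 r_a t^{1/(1+\alpha)}$; its integral $\int_0^{N^k} C_* r_a^{-\alpha} t^{-\alpha/(1+\alpha)}\,\mathrm{d}t = (1+\alpha) C_* r_a^{-\alpha} N^{k/(1+\alpha)}$ combines with the weight $e^{a N^{k/(1+\alpha)}}$ coming from pinning $S_\ell$ near the top of the strip (via a Gnedenko-type local limit theorem for $\alpha$-stable walks) to produce the first-moment lower bound $\exp((r_a - o(1)) N^{k/(1+\alpha)})$, by exactly the balance equation $a = r_a + (1+\alpha) C_* r_a^{-\alpha}$ that defines $r_a$. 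A parallel many-to-two computation bounds the second moment by a constant times the square of the first, so Paley--Zygmund yields: the number of $(N,k)$-descendants of $u$ is at least $L_k$ with probability at least some $p > 0$ that is uniform in $k$ and in the position of $u$ within the allowed strip.

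\emph{Step 2 (iteration).} Conditional on $\xi_{k-1} \geq L_{k-1}$, the $\xi_{k-1}$ subtrees rooted at $(N,k-1)$-particles in $\mathcal{T}_{N^{k-1}}$ are independent, and Step~1 gives
\[
\mathbb{P}(\xi_k \geq L_k \mid \xi_{k-1} \geq L_{k-1}) \geq 1 - (1-p)^{L_{k-1}} \geq 1 - e^{-p L_{k-1}}.
\]
Applying Step~1 to the root gives $\mathbb{P}(\xi_1 \geq L_1) \geq p$, and combining yields $\mathbb{P}(B_N) \geq p \prod_{k \geq 1}(1 - e^{-p L_k}) > 0$, since $\log L_k \sim \tfrac12(r_a-\varepsilon) N^{k/(1+\alpha)}$ grows so fast that $\sum_k e^{-p L_k} < \infty$.

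\emph{Main obstacle.} The technical heart is the stable small-ball/persistence estimate used in Step~1: one must bound $\mathbb{P}((a - r_a)(m + j)^{1/(1+\alpha)} \leq S_j \leq a(m+j)^{1/(1+\alpha)},\, j \leq \ell)$ for the pre-limit walk confined between two time-varying curves, with a prescribed endpoint near the upper one. For $\alpha = 2$, Jaffuel uses Brownian-bridge eigenvalue calculations together with KMT coupling; in the $\alpha$-stable case the corresponding tools are an $\alpha$-stable invariance principle applied on dyadic sub-intervals (where the barrier is approximately flat, so stable scaling reduces each sub-problem to the event $\{|Y_s| \leq 1/2,\, s \leq \tau\}$ to which \eqref{C*} directly applies), combined with a Gnedenko-type local limit theorem for the endpoint. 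Patching these dyadic bounds so as to reproduce the exact integrated cost $(1+\alpha) C_* r_a^{-\alpha} N^{k/(1+\alpha)}$ with matching lower bound is the main technical departure from the $\alpha = 2$ argument in \cite{j}.
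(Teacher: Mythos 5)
There is a genuine gap in Step 1, and it propagates into Step 2. You claim that ``a parallel many-to-two computation bounds the second moment by a constant times the square of the first,'' which would make the Paley--Zygmund lower bound $p$ uniform in $k$. That is false here. With $Z_k$ the number of confined descendants over the $k$-th block, the paper's decomposition of $\mathbb{E}(Z_k^2)$ over the splitting generation $j$ shows (see the bound on $D^{(k)}_{k,\beta(\rho,l_k)-1}/(\mathbb{E}Z_k^{(k)})^2$ leading to \eqref{zkk_2}) that
\[
\limsup_{k\to\infty}\, l_k^{-\frac{1}{1+\alpha}}\log\frac{\mathbb{E}\big((Z^{(k)}_k)^2\big)}{\big(\mathbb{E}Z^{(k)}_k\big)^2}\;\geq\; -\min_{0\leq\rho\leq1}\Big\{g_2(\rho)-g(\rho)-C_*\int_0^\rho\frac{dt}{(g_2-g_1)^\alpha}\Big\}\;\geq\; b\,f(0)\;>\;0,
\]
since at $\rho=0$ the right side of the paper's bound equals $g(0)-g_2(0)=b f(0)$. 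Pairs whose common ancestor splits near the start of the block dominate and force $\mathbb{E}(Z_k^2)/(\mathbb{E}Z_k)^2\to\infty$ exponentially fast in $l_k^{1/(1+\alpha)}$. Consequently the success probability $T_k=\mathbb{P}(Z_k^{(k)}\geq\theta\,\mathbb{E}Z_k^{(k)})$ obtained from Paley--Zygmund decays to $0$, and your Step~2 estimate $\mathbb{P}(\xi_k\geq L_k\mid\xi_{k-1}\geq L_{k-1})\geq 1-e^{-pL_{k-1}}$ with a fixed $p$ cannot be justified.

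What saves the iteration is not a uniform $p$ but the interplay between the \emph{growth} of the number of trials and the \emph{decay} of the per-trial success probability. The paper (Lemma~\ref{L:4.1}) tracks the larger count threshold $\nu_{k-1}=\theta\,\mathbb{E}Z_{k-1}^{(k-1)}$ rather than $L_k$, and the substantive claim is that $\nu_{k-1}T_k\to\infty$ superexponentially, which follows because the first-moment exponent $g_2(1)-C_*\int_0^1 g^{-\alpha}$ strictly dominates the second-moment loss once $\lambda$ is large and $b$ is chosen with $b+\frac{(1+\alpha)C_*}{b^\alpha}<a$ (possible precisely because $a>a_\alpha$). Only at the end does one verify $\nu_{k-1}\geq\exp(\tfrac12 N^{k/(1+\alpha)}(r_a-\varepsilon))$ for $N=e^\lambda$ large, recovering your $L_k$. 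You also omit the offspring truncation at level $R_k=\lfloor e^{l_k^{1/c}}\rfloor$, which is needed both to invoke the enriched Mogul'ski\v\i-type estimate (Lemma~\ref{L:2.8}) in the first-moment lower bound and to obtain the factor $R_k-1$ in the second-moment decomposition \eqref{bkj}; without it neither moment estimate is available in the form required. Your one-block heuristic (stable scaling giving cost $(1+\alpha)C_*r_a^{-\alpha}$ balanced against the weight $a$ via $a=r_a+(1+\alpha)C_*r_a^{-\alpha}$) is the right intuition for the first moment, but the argument as written is incomplete because it rests on a second-moment bound that does not hold.
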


 The remainder of the paper is organized as follows. In section 2,  we  prove several lemmas on the one-dimensional random walk associated with $(V(x))$, which will be used in the proofs of Theorem~\ref{T:1.1} and Proposition~\ref{P:1.3}. In  section 3, we prove  Proposition \ref{P:1.3} and the lower bound for the survival probability in Theorem~\ref{T:1.1}. The upper bound in Theorem~\ref{T:1.1} are discussed in section 4. Our technical routes and proofs are based on Mallein~\cite{Mal}, A\"{\i}d\'{e}kon and Jaffuel~\cite{AiJa} and Jaffuel \cite{j}.

\section{Small deviations estimate and variations}

Let $(X_i)_{i\in\mathbb{N}}$ be an i.i.d. sequence of copies of $X$ defined by (\ref{X}). Let $S_0=0$, and  for any $n\ge 1 $, $$S_n:=\sum_{i=1}^nX_i. $$ $S$ is then a mean-zero heavy-tailed random walk starting from the origin.

\begin{lemma}\label{L:2.1}\rm{ \textbf{(many-to-one lemma, Biggins and Kyprianou \cite{BK4})}}. \emph{For any $n\geq1$ and any measurable function F: $\mathbb{R}^n\rightarrow[0,+\infty)$,
$$
\mathbb{E}\Big(\sum_{|   u|=n}e^{-V(   u)}F\big(V(   u_i),\,1\leq i\leq n\big)\Big)
=\mathbb{E}\big(F(S_i),\,1\leq i\leq n\big).
$$}
\end{lemma}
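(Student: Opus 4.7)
The plan is induction on $n$. The base case $n=1$ is immediate from the definition of the law of $X$ in \eqref{X}: for any nonnegative measurable $F:\mathbb{R}\to[0,\infty)$,
\beqnn
\mathbb{E}\Big(\sum_{|u|=1}e^{-V(u)}F(V(u))\Big)=\int_{\mathbb{R}} F(x)\,\mathbb{P}(X\in dx)=\mathbb{E}(F(X))=\mathbb{E}(F(S_1)),
\eeqnn
where the first equality is the standard monotone-class extension from indicator $F$, for which it reduces to the very definition of the distribution of $X$.

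For the inductive step I would exploit the branching property. For each $v$ with $|v|=1$, let $(V_v(w))$ denote the sub-BRW rooted at $v$ with positions recentered so that the root sits at $0$. The construction of $(V(x))$ ensures that $\{V_v(\cdot):|v|=1\}$ are iid copies of the original BRW and are independent of $\sigma(V(v):|v|=1)$. Each $u$ with $|u|=n$ factors uniquely as $u=vw$ with $v=u_1$ and $w$ in the subtree of $v$ at depth $n-1$; then $V(u_i)=V(v)+V_v(w_{i-1})$ for $2\le i\le n$ and $e^{-V(u)}=e^{-V(v)}e^{-V_v(w)}$. Pulling the outer sum over $v$ through the expectation (Fubini--Tonelli is legal because all terms are nonnegative) gives
\beqnn
\mathbb{E}\Big(\sum_{|u|=n}e^{-V(u)}F(V(u_i),1\le i\le n)\Big)=\mathbb{E}\Big(\sum_{|v|=1}e^{-V(v)}H(V(v))\Big),
\eeqnn
where $H(y):=\mathbb{E}\bigl(\sum_{|w|=n-1}e^{-V(w)}F(y,y+V(w_i),1\le i\le n-1)\bigr)$.

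Now I apply the inductive hypothesis to the shifted function $(x_1,\ldots,x_{n-1})\mapsto F(y,y+x_1,\ldots,y+x_{n-1})$, which yields $H(y)=\mathbb{E}(F(y,y+S_1,\ldots,y+S_{n-1}))$. A second application of the $n=1$ case, this time with $H$ in place of $F$, then gives
\beqnn
\mathbb{E}\Big(\sum_{|v|=1}e^{-V(v)}H(V(v))\Big)=\mathbb{E}(H(X))=\mathbb{E}\bigl(F(X,X+\widetilde S_1,\ldots,X+\widetilde S_{n-1})\bigr),
\eeqnn
where $X$ is independent of an independent copy $(\widetilde S_1,\ldots,\widetilde S_{n-1})$ of $(S_1,\ldots,S_{n-1})$. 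Since $S$ has iid increments with common law $X$, the joint law of $(X,X+\widetilde S_1,\ldots,X+\widetilde S_{n-1})$ coincides with that of $(S_1,\ldots,S_n)$, closing the induction. I expect no substantive obstacle: the identity is a classical spinal decomposition, and the only item of care is the correct parametrization of the shifted function when applying the induction hypothesis.
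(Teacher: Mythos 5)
The paper does not give a proof of this lemma; it is cited from Biggins and Kyprianou (1997) and used as a black box, so there is nothing in the paper to compare your argument against. Your proof by induction on $n$ is correct and is the standard spinal/many-to-one argument: the base case is a direct reading of~\eqref{X}, and the inductive step decomposes the sum through the first generation, invokes the branching property (conditionally on $\mathcal F_1$ the recentered sub-BRWs rooted at first-generation particles are i.i.d.\ copies of the original, independent of the generation-one positions) to replace the inner sum by $H(V(v))$, then applies the inductive hypothesis to the shifted function and the $n=1$ case with $H$ in place of $F$, finishing with the observation that $(X,X+\widetilde S_1,\ldots,X+\widetilde S_{n-1})$ and $(S_1,\ldots,S_n)$ have the same law. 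The one point worth stating more carefully is the ``Fubini--Tonelli'' step: what is really being used is the branching property via conditioning on $\mathcal F_1$ (the index set $\{v:|v|=1\}$ is random, so this is not a mere interchange of sum and expectation), together with monotone convergence to pass the conditional expectation inside the sum; but these are routine and the argument is sound.
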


{  Our proof of the lower bound { of Theorem~\ref{T:1.1}}  requires the following bivariate version of the many-to-one lemma.}
\begin{lemma}\label{L:2.2}\rm{\textbf{(Gantert, Hu and Shi \cite{GHS})}} \emph{Suppose $X$ is a random variable defined by (\ref{X}). Let $\upsilon$ be a random variable taking values in $\mathbb{N}^*$ such that for any nonnegative measurable function $f$,
$$
\mathbb{E}\big(f(X,\upsilon)\big)=\mathbb{E}\Big(\sum_{|   u|=1}e^{-V(   u)}f(V(   u),\#
\mathcal{T}_1)\Big).
$$
Let $n\geq1$ and $(X_i,\upsilon_i)_{1\leq i\leq n}$ be i.i.d. copies of $(X,\upsilon)$. Then for any measurable function $F: (\mathbb{R}\times\mathbb{N}^*)^n\rightarrow[0,+\infty)$, it holds that
$$
\mathbb{E}\Big(\sum_{|   u|=n}e^{-V(   u)}F(V(   u_i),\,\#\Gamma(   u_{i-1}),1\leq i\leq n)\Big)=\mathbb{E}\big(F(S_i,\upsilon_i,1\leq i\leq n)\big),
$$
where $\Gamma(   u):=\{  v\in\mathcal{T}:|  v|=|   u|+1,  v>   u\}$.}
\end{lemma}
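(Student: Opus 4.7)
The plan is to prove the identity by induction on $n$, combining the branching property of $\mathcal{T}$ with the single-step defining identity of $(X,\upsilon)$. For $n=1$ the path reduces to the root $u_0$ and a single generation-one vertex $u_1=u$, so $\#\Gamma(u_0)=\#\mathcal{T}_1$ and the left-hand side is exactly $\mathbb{E}\big[\sum_{|u|=1}e^{-V(u)}F(V(u),\#\mathcal{T}_1)\big]$. By the definition of $(X,\upsilon)$ this equals $\mathbb{E}[F(X,\upsilon)]=\mathbb{E}[F(S_1,\upsilon_1)]$.

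For the inductive step I would decompose each vertex $u$ with $|u|=n$ by its first-generation ancestor $u_1=w$. Let $\mathcal{T}^w$ be the subtree rooted at $w$, with relative positions $V^w(\cdot):=V(\cdot)-V(w)$, and let $\hat u_0=w,\hat u_1,\dots,\hat u_{n-1}$ be the path from $w$ to $u$ inside $\mathcal{T}^w$. Then $V(u_i)=V(w)+V^w(\hat u_{i-1})$ for $1\leq i\leq n$ and, for $i\geq 2$, $\#\Gamma(u_{i-1})=\#\Gamma^w(\hat u_{i-2})$, while $\#\Gamma(u_0)=\#\mathcal{T}_1$. Conditioning on the generation-one $\sigma$-field and using that the subtrees $(\mathcal{T}^w)_{|w|=1}$ are i.i.d.\ copies of $\mathcal{T}$, I apply the induction hypothesis inside each subtree to the function
\begin{equation*}
\tilde F_{x,m}(y_1,\dots,y_{n-1};m_1,\dots,m_{n-1}):=F(x,\,x+y_1,\dots,x+y_{n-1};\,m,m_1,\dots,m_{n-1})
\end{equation*}
with $x=V(w)$ and $m=\#\mathcal{T}_1$ frozen, which converts the inner subtree sum into
\begin{equation*}
g(V(w),\#\mathcal{T}_1),\qquad g(x,m):=\mathbb{E}\big[F(x,\,x+S_1,\dots,x+S_{n-1};\,m,\upsilon_1,\dots,\upsilon_{n-1})\big],
\end{equation*}
where the expectation is taken over an independent copy $(S_j,\upsilon_j)_{j=1}^{n-1}$ of the bivariate random walk.

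Taking outer expectation yields $\mathbb{E}\big[\sum_{|w|=1}e^{-V(w)}g(V(w),\#\mathcal{T}_1)\big]$, and one final application of the defining identity of $(X,\upsilon)$ with $f=g$ rewrites this as $\mathbb{E}[g(X,\upsilon)]=\mathbb{E}[F(X,\,X+S_1,\dots,X+S_{n-1};\,\upsilon,\upsilon_1,\dots,\upsilon_{n-1})]$. Relabeling $X_1:=X$, $\upsilon_1:=\upsilon$ and shifting the remaining indices by one produces i.i.d.\ copies $(X_i,\upsilon_i)_{i=1}^n$ with $S_k=X_1+\cdots+X_k$, and identifies the result with $\mathbb{E}[F(S_i,\upsilon_i;\,1\leq i\leq n)]$, closing the induction.

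The only delicate point is the index bookkeeping: the children-count $\#\Gamma(u_0)=\#\mathcal{T}_1$ at the root is paired with the positional coordinate $V(u_1)$, which is precisely the pairing encoded in the joint law of $(X,\upsilon)$; thereafter the shift along the path is automatic. No further moment or integrability input is needed beyond the branching property and the defining identity, so I do not expect any substantive obstacle in executing the plan.
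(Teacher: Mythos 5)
Your proof is correct: the base case matches the defining identity of $(X,\upsilon)$ exactly, and the inductive step via the branching property (conditioning on the first generation, applying the hypothesis to the relative walk in each subtree with $x=V(w)$, $m=\#\mathcal{T}_1$ frozen, then using the defining identity once more with $f=g$) handles the index pairing $(V(u_i),\#\Gamma(u_{i-1}))$ correctly. The paper itself gives no proof — it simply cites Gantert, Hu and Shi — and your induction is the standard argument by which this bivariate many-to-one formula is established, so there is nothing to reconcile beyond noting that your write-up is a complete, self-contained substitute for the citation.
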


 Let $\mathcal{F}[0,1]$ (respectively, $\mathcal{C}[0,1]$) be the set of functions (respectively, continuous functions) from $[0,1]$ to $\mathbb{R}$. For  $z\in\mathbb{R}$, we denote by $\mathbb{P}_z$ the probability associated with the branching random walk $(V(x))$ starting from $z$, and $\mathbb{E}_z$ the corresponding expectation. We breviate $\mathbb{P}_0$ by $\mathbb{P}$.

 In the following Theorem \ref{T:2.1} and Lemma~\ref{L:2.9}, let $(  c_n)$ be a sequence of positive real numbers such that
$$
\lim_{n\rightarrow\infty}   c_n=+\infty, \;\;\lim_{n\rightarrow\infty}\frac{  c_n}{n^{1/\alpha}}=0.
$$

\begin{theorem}\rm{\textbf{(Mogul'ski\v{\i}}} \cite{Mog})\label{T:2.1}
Let $f,g\in\mathcal{C}[0,1],$ with $f<g$ and $f(0)<0<g(0)$. We have
\begin{align}
\lim_{n\rightarrow \infty} \frac{  c_n^\alpha}{n}\log \mathbb{P}\bigg(\frac{S_j}{  c_n}\in\Big[f\Big(\frac{j}{n}\Big),g\Big(\frac{j}{n}\Big)\Big],\,0\leq j\leq n\bigg)=-C_*\int_0^1\frac{ds}{(g(s)-f(s))^\alpha}, \nonumber
\end{align}
where $C_*$ is defined by (\ref{C*}).
\end{theorem}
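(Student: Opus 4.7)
The plan is to reduce the continuous-barrier case to a small-deviation problem for the limiting $\alpha$-stable process via a block decomposition in time and an approximation of $f,g$ by step functions. The key building block is the constant-barrier estimate
\beqlb\label{const}
\lim_{n\rightarrow\infty}\frac{c_n^\alpha}{n}\log \mathbb{P}_x\Big(\frac{S_j}{c_n}\in\Big[a-\frac{w}{2},a+\frac{w}{2}\Big],\,0\leq j\leq n\Big)=-\frac{C_*}{w^\alpha},
\eeqlb
uniformly in starting points $x$ with $x/c_n$ at positive distance from the boundary. To prove \Ref{const} I would split $[0,n]$ into sub-blocks of length $Lc_n^\alpha$ with $L$ large but fixed. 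The invariance principle $(S_{[c_n^\alpha s]}/c_n)_s\Rightarrow(Y_s)_s$ then gives that each sub-block probability is close to $\mathbb{P}(|Y_s|\leq w/2,\,s\leq L)$; taking the product over $n/(Lc_n^\alpha)$ sub-blocks via the Markov property and then letting $L\to\infty$ recovers the defining limit \Ref{C*}, with the factor $w^{-\alpha}$ coming from the stable scaling $w^{-1}Y_s\stackrel{d}{=}Y_{s/w^\alpha}$.

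For the \textbf{upper bound}, I would partition $[0,1]$ into $M$ equal subintervals $I_i=[i/M,(i+1)/M]$ and set $w_i^+:=\sup_{t\in I_i}g(t)-\inf_{t\in I_i}f(t)$. Since $[f(j/n),g(j/n)]\subset[\inf_{I_i}f,\sup_{I_i}g]$ for $j/n\in I_i$, the Markov property applied at the times $in/M$ gives
\beqnn
P_n\leq\prod_{i=0}^{M-1}\sup_{x}\mathbb{P}_{x}\Big(\frac{S_j}{c_n}\in\big[\inf_{I_i}f,\sup_{I_i}g\big],\,0\leq j\leq n/M\Big).
\eeqnn
Applying \Ref{const} to each factor, taking $(c_n^\alpha/n)\log$ and letting first $n\to\infty$ then $M\to\infty$, the Riemann sum $M^{-1}\sum_i(w_i^+)^{-\alpha}$ converges to $\int_0^1(g-f)^{-\alpha}ds$ by uniform continuity of $f$ and $g$.

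For the \textbf{lower bound}, I would force the walk to follow the piecewise constant midpoint $h_i:=(\sup_{I_i}f+\inf_{I_i}g)/2$ by restricting $S_j/c_n$ to a strip of width $w_i^--2\epsilon$ (where $w_i^-:=\inf_{I_i}g-\sup_{I_i}f$) around $h_i$ on block $i$, and additionally requiring $S_{in/M}/c_n\approx h_{i+1}$ at each block boundary. By the Markov property the probability factorises; each factor is bounded below by \Ref{const} (applied with centre near $h_i$), and the $O(M)$ endpoint-localisation events cost only finite factors that are subexponential in $n/c_n^\alpha$. Letting $M\to\infty$ and $\epsilon\to 0$ matches the upper bound.

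The main technical obstacle is the uniformity in \Ref{const}: the constant-barrier small-deviation estimate must hold uniformly in the starting position, because in the block-by-block Markov argument the walk enters block $i+1$ at a position determined by block $i$. I would handle this with a sup (resp.\ inf) over the allowed starting set in the upper (resp.\ lower) bound, which by translation invariance reduces to the centred case of \Ref{const} with slightly modified widths, still converging to the target as $M\to\infty$.
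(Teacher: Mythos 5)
Theorem~\ref{T:2.1} is quoted without proof from Mogul'ski\v{\i} \cite{Mog}; the paper offers no argument of its own (and the enriched variant, Lemma~\ref{L:2.9r}, is likewise deferred to Mallein \cite{Mal}). Your blocking strategy---a constant-barrier rate obtained from sub-blocks of length $Lc_n^\alpha$ via the functional stable invariance principle of Theorem~\ref{T:2.6}, followed by a Riemann-sum approximation of the variable barrier and the Markov property at block boundaries---is the standard route, and it is essentially how Mogul'ski\v{\i} and Mallein argue, so the approach is correct.

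Two of the steps you gloss over, however, need real work before the sketch is a proof. First, the constant-barrier limit you label as the building block should be set up through the subadditive sequence $n\mapsto-\log\sup_x\mathbb{P}_x\big(|S_j|\le c_n w/2,\,j\le n\big)$; Fekete's lemma gives existence of the rate, and the functional CLT on a single block plus the self-similarity $w^{-1}Y\stackrel{d}{=}Y_{\cdot/w^\alpha}$ (which you use correctly) identify it with $C_*/w^\alpha$. Merely observing that each sub-block probability converges to $\mathbb{P}(|Y_s|\le w/2,\,s\le L)$ does not deliver uniformity over the entry point, and that uniformity is exactly what the Markov-property product over blocks requires; the $\sup_x$ and subadditivity are what repair this. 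Second, in the lower bound the claim that the $O(M)$ endpoint localisations $S_{in/M}/c_n\approx h_{i+1}$ cost only a factor subexponential in $n/c_n^\alpha$ is true but not automatic: one must show, uniformly in $n$ and in the entry point, that the conditional law of the endpoint given survival in the strip for $n/M$ steps charges every fixed interior subinterval with mass bounded away from zero. That again comes from Theorem~\ref{T:2.6} together with full support of the conditioned stable process in the interior of the strip, and it is the content of the technical lemma in \cite{Mal} that the paper leans on for Lemma~\ref{L:2.9r}. With those two points spelt out, your outline matches the literature proof; as written it is accurate but incomplete at precisely the places you yourself flag as delicate.
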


The following three lemmas are some more sophisticated versions of  above theorem. For the proofs of them,  we shall borrow some ideas from A\"{\i}d\'{e}kon and Jaffuel~\cite{AiJa}, which discussed the case $\alpha=2$.

\begin{lemma}\label{L:2.9}
Let $f,g\in\mathcal{C}[0,1],$ with $f<g$ and $f(0)<0<g(0)$. For any sequences  $(f_n)$ and $(g_n)$ of $\mathcal{F}[0,1]$ such that $\|f_n-f\|_{\infty}\rightarrow0$ and $\|g_n-g\|_{\infty}\rightarrow0$ as $n\rightarrow \infty,$ we have
\begin{align}
\lim_{n\rightarrow \infty} \frac{  c_n^\alpha}{n}\log  \mathbb{P}\bigg(\frac{S_j}{  c_n}\in\Big[f_n\Big(\frac{j}{n}\Big),g_n\Big(\frac{j}{n}\Big)\Big],\,0\leq j\leq n\bigg)=-C_*\int_0^1\frac{ds}{(g(s)-f(s))^\alpha}, \nonumber
\end{align}
where $C_*$ is defined by (\ref{C*}).\end{lemma}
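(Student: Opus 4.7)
The plan is to derive Lemma~\ref{L:2.9} from Theorem~\ref{T:2.1} via a straightforward uniform-perturbation sandwich, which only needs the hypotheses of Theorem~\ref{T:2.1} to be stable under a small $\varepsilon$-shift of the boundaries.

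First I would fix $\varepsilon>0$ small enough so that $f\pm\varepsilon$ and $g\pm\varepsilon$ all lie in $\mathcal{C}[0,1]$, satisfy $f+\varepsilon<g-\varepsilon$ on $[0,1]$, and satisfy $f(0)+\varepsilon<0<g(0)-\varepsilon$; this is possible because $\delta:=\min_{s\in[0,1]}(g(s)-f(s))>0$ by compactness and because $f(0)<0<g(0)$ strictly. In particular each of the two pairs $(f+\varepsilon,g-\varepsilon)$ and $(f-\varepsilon,g+\varepsilon)$ satisfies the hypotheses of Theorem~\ref{T:2.1}. The uniform convergences $\|f_n-f\|_\infty\to 0$ and $\|g_n-g\|_\infty\to 0$ then yield, for every $n$ sufficiently large and every $s\in[0,1]$, the pointwise bounds $f(s)-\varepsilon\le f_n(s)\le f(s)+\varepsilon$ and $g(s)-\varepsilon\le g_n(s)\le g(s)+\varepsilon$, which translate into the nested inclusions
\[
[f(j/n)+\varepsilon,\,g(j/n)-\varepsilon] \,\subset\, [f_n(j/n),\,g_n(j/n)] \,\subset\, [f(j/n)-\varepsilon,\,g(j/n)+\varepsilon]
\]
for all $0\le j\le n$, and hence a two-sided probability sandwich between the probabilities corresponding to the continuous boundaries $(f+\varepsilon,g-\varepsilon)$ and $(f-\varepsilon,g+\varepsilon)$.

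Applying Theorem~\ref{T:2.1} to both bounding probabilities gives
\begin{align*}
\liminf_{n\to\infty}\frac{c_n^\alpha}{n}\log\mathbb{P}(\cdot)\;
&\ge\; -C_*\int_0^1\frac{ds}{(g(s)-f(s)-2\varepsilon)^\alpha}, \\
\limsup_{n\to\infty}\frac{c_n^\alpha}{n}\log\mathbb{P}(\cdot)\;
&\le\; -C_*\int_0^1\frac{ds}{(g(s)-f(s)+2\varepsilon)^\alpha}.
\end{align*}
Finally I would let $\varepsilon\downarrow 0$: for $\varepsilon<\delta/4$ both integrands are dominated pointwise by the constant $(\delta/2)^{-\alpha}$, so dominated convergence forces both right-hand sides to the common value $-C_*\int_0^1(g(s)-f(s))^{-\alpha}\,ds$, which identifies the claimed limit. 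The argument presents no genuine difficulty: the only point of care is verifying that the perturbed boundaries meet the hypotheses of Theorem~\ref{T:2.1}, which is immediate from compactness and the strict inequalities $f<g$, $f(0)<0<g(0)$; no new probabilistic input beyond Mogul'ski\v{\i}'s estimate is needed, since this lemma is purely a stability-under-uniform-perturbation upgrade of Theorem~\ref{T:2.1}.
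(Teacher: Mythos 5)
Your proposal is correct and matches the paper's proof essentially step for step: both sandwich $\{f_n \le S_j/c_n \le g_n\}$ between the events for the continuous boundaries $(f+\varepsilon, g-\varepsilon)$ and $(f-\varepsilon, g+\varepsilon)$, apply Theorem~\ref{T:2.1} to each, and let $\varepsilon \downarrow 0$. The only (harmless) difference is that you invoke dominated convergence to pass to the limit in the integral, whereas the paper treats this as immediate; the substance is the same.
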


\begin{proof}
Let $0<\,\varepsilon< \frac{1}{2}\min\{\min_{[0,1]}|g-f|, |f(0)|, |g(0)|\}$. We can choose $N\geq1$ s.t. for any $n\geq N,\;\max_{[0,1]}|f_n-f|+\max_{[0,1]}|g_n-g|<\varepsilon$. Then for such $n$, we have
\begin{align}
\Big\{f+\varepsilon<\frac{S_j}{  c_n}<g-\varepsilon\Big\}\subset\Big\{f_n<\frac{S_j}{  c_n}<g_n\Big\}
\subset \Big\{f-\varepsilon<\frac{S_j}{  c_n}<g+\varepsilon\Big\} \nonumber
\end{align}
Applying Theorem \ref{T:2.1}, we get
\begin{align}
& -C_*\int_0^1\frac{ds}{(g-f-2\varepsilon)^\alpha}\nonumber\\&\leq \liminf_{n\rightarrow\infty}
\frac{  c_n^\alpha}{n}\log  \mathbb{P} \bigg(\frac{S_j}{  c_n}\in\Big[f_n\Big(\frac{j}{n}\Big),g_n\Big(\frac{j}{n}\Big)\Big],\,0\leq j\leq n\bigg)\nonumber\\
&\leq \limsup_{n\rightarrow\infty}
\frac{  c_n^\alpha}{n}\log  \mathbb{P} \bigg(\frac{S_j}{  c_n}\in\Big[f_n\Big(\frac{j}{n}\Big),g_n\Big(\frac{j}{n}\Big)\Big],\,0\leq j\leq n\bigg)\nonumber\\
&\leq -C_*\int_0^1\frac{ds}{(g-f+2\varepsilon)^\alpha}.\nonumber
\end{align}
Letting $\varepsilon\rightarrow0$, we complete the proof.\qed
\end{proof}

\begin{lemma}\label{L:2.3}
Let $f,g\in\mathcal{C}[0,1],$ with $f<g$ and $f(0)<0<g(0)$. Let $(f_n)$ and $(g_n)$
be sequences of $\mathcal{F}[0,1]$ such that $\|f_n-f\|_{\infty}\rightarrow0$ and $\|g_n-g\|_{\infty}\rightarrow0$ as $n\rightarrow \infty.$ Let $\beta^*$ and $\gamma^*$ be positive real numbers such that $0\leq \beta^*< \gamma^* \leq1$. Let
$   u^*,  v^*\in\mathbb{R}$ s.t. $f(\beta^*)\leq    u^* <  v^*\leq g(\beta^*)$.
Let $\gamma(n)>\beta(n)$ be the sequences of positive integers, and $(   \mu_n)_n,\,(  \nu_n)_n$ be sequences of reals such that

$$
n^{-\frac1{1+\alpha}}   \mu_n \rightarrow    u^*,\;\;n^{-\frac1{1+\alpha}}  \nu_n \rightarrow  v^*,\;\;
\frac{\beta(n)}{n}\rightarrow\beta^*,\;\;\frac{\gamma(n)}{n}\rightarrow\gamma^*,
$$
and for any $n\geq1$,
$$
f_n(\beta(n)/n)n^\frac1{1+\alpha}\leq    \mu_n\leq   \nu_n\leq g_n(\beta(n)/n)n^\frac1{1+\alpha}, \;0\leq \beta(n)<\gamma(n)\leq n.
$$
We also assume that
\begin{align}
\exists\; M\in\mathbb{N}^*,\; \forall\; m\in\mathbb{N}^*,\;\#\{n: \gamma(n)-\beta(n)=m\}\leq M.  \label{ASS}
\end{align}Then
\begin{align}
&\lim_{n\rightarrow \infty} n^{-\frac{1}{1+\alpha}}\log \Bigg(\sup_{   \mu_n\leq z\leq  \nu_n}  \mathbb{P}_z\bigg(\frac{S_{k-\beta(n)}}{\,n^\frac1{1+\alpha}}\in\Big[f_n\Big(\frac{k}{n}\Big),g_n\Big(\frac{k}{n}\Big)\Big]\;,\,
 \beta(n)< k\leq \gamma(n)\bigg) \Bigg)\nonumber\\
&\leq-C_*\int_{\beta^*}^{\gamma^*}
\frac{ds}{(g(s)-   u^*-f(s)+  v^*)^\alpha}. \nonumber
\end{align}
\end{lemma}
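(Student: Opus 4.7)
The plan is a two-step reduction: first enlarge the corridor so that the supremum over the starting point $z$ becomes a single unconditioned probability, and then rescale time by $N(n):=\gamma(n)-\beta(n)$ and apply Lemma~\ref{L:2.9}.

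Under $\mathbb{P}_z$ one has $S_0=z$; setting $\tilde S_j:=S_j-z$ and $j=k-\beta(n)$, the event in the probability rewrites as
\[
\Big\{\tilde S_j\in\big[f_n\big(\tfrac{j+\beta(n)}{n}\big)n^{\frac{1}{1+\alpha}}-z,\;g_n\big(\tfrac{j+\beta(n)}{n}\big)n^{\frac{1}{1+\alpha}}-z\big],\ 1\le j\le N(n)\Big\}.
\]
Because $z\in[\mu_n,\nu_n]$, this interval is always contained in the wider interval $\big[f_n\big(\tfrac{j+\beta(n)}{n}\big)n^{\frac{1}{1+\alpha}}-\nu_n,\;g_n\big(\tfrac{j+\beta(n)}{n}\big)n^{\frac{1}{1+\alpha}}-\mu_n\big]$. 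Hence, uniformly in $z$, $\mathbb{P}_z(\cdot)$ is bounded above by the probability $P_n^{\mathrm{enl}}$ that $\tilde S$ stays in this wider corridor for $1\le j\le N(n)$. Moreover, assumption \eqref{ASS} forces $N(n)\to\infty$, for otherwise some $m$ would be taken by infinitely many $n$, contradicting \eqref{ASS}.

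Next, divide the widened corridor by $c:=n^{\frac{1}{1+\alpha}}$ and rescale time by $N(n)$: set
\[
\tilde f_n(s):=f_n\!\Big(\tfrac{sN(n)+\beta(n)}{n}\Big)-\tfrac{\nu_n}{n^{\frac{1}{1+\alpha}}},\qquad
\tilde g_n(s):=g_n\!\Big(\tfrac{sN(n)+\beta(n)}{n}\Big)-\tfrac{\mu_n}{n^{\frac{1}{1+\alpha}}}.
\]
Using $\beta(n)/n\to\beta^*$, $N(n)/n\to\gamma^*-\beta^*$, $\mu_n/n^{1/(1+\alpha)}\to u^*$, $\nu_n/n^{1/(1+\alpha)}\to v^*$, $\|f_n-f\|_\infty,\|g_n-g\|_\infty\to 0$ and the uniform continuity of $f,g$ on $[0,1]$, we obtain $\tilde f_n\to\tilde f$ and $\tilde g_n\to\tilde g$ uniformly on $[0,1]$, where $\tilde f(s)=f(\beta^*+s(\gamma^*-\beta^*))-v^*$ and $\tilde g(s)=g(\beta^*+s(\gamma^*-\beta^*))-u^*$. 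From $f(\beta^*)\le u^*<v^*\le g(\beta^*)$ and $f<g$ we get $\tilde f(0)<0<\tilde g(0)$ and $\tilde f<\tilde g$; the hypothesis $f_n(\beta(n)/n)n^{1/(1+\alpha)}\le\mu_n\le\nu_n\le g_n(\beta(n)/n)n^{1/(1+\alpha)}$ also ensures that $\tilde S_0=0\in[\tilde f_n(0),\tilde g_n(0)]$ for large $n$, so extending the event from $1\le j\le N(n)$ to $0\le j\le N(n)$ is costless. Since $c\to\infty$ and $c/N(n)^{1/\alpha}\to 0$ (because $1/(1+\alpha)<1/\alpha$ and $N(n)\sim(\gamma^*-\beta^*)n$), Lemma~\ref{L:2.9} then yields
\[
\frac{n^{\alpha/(1+\alpha)}}{N(n)}\log P_n^{\mathrm{enl}}\ \longrightarrow\ -C_*\int_0^1\frac{ds}{(\tilde g(s)-\tilde f(s))^\alpha}.
\]
As $n^{\alpha/(1+\alpha)}/N(n)\sim n^{-1/(1+\alpha)}/(\gamma^*-\beta^*)$ and $\tilde g(s)-\tilde f(s)=g(\beta^*+s(\gamma^*-\beta^*))-f(\beta^*+s(\gamma^*-\beta^*))+v^*-u^*$, the substitution $t=\beta^*+s(\gamma^*-\beta^*)$ converts the right-hand side into $-C_*\int_{\beta^*}^{\gamma^*}\frac{dt}{(g(t)-u^*-f(t)+v^*)^\alpha}$. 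Combined with the enlargement bound, this yields the claimed upper bound.

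The conceptual heart of the argument is the enlargement step: widening the corridor by $v^*-u^*$ to absorb the supremum over $z$ is exactly what introduces $u^*,v^*$ into the final integrand. The remaining subtlety is bookkeeping in the rescaling, together with the fact that Lemma~\ref{L:2.9} is being applied along the effective time variable $N(n)$; the hypothesis \eqref{ASS} is precisely what is needed to guarantee $N(n)\to\infty$ so that the asymptotic regime of Lemma~\ref{L:2.9} is meaningful.
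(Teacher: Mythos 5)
Your overall strategy—widen the corridor by $[\mu_n,\nu_n]$ to absorb the supremum over $z$, then rescale time by $N(n)=\gamma(n)-\beta(n)$ and invoke Lemma~\ref{L:2.9}—is exactly the paper's strategy, and your enlargement inclusion and final change of variables are correct. But the invocation of Lemma~\ref{L:2.9} is where there is a genuine gap, and it is precisely the step that hypothesis \eqref{ASS} is designed to fix.

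Lemma~\ref{L:2.9} is stated with a \emph{single} index $n$ that simultaneously plays the role of the time horizon, the subscript of the corridor functions $(f_n,g_n)$, and the subscript of the normalizing sequence $(c_n)$. In your application the effective time horizon is $N(n)$, while your rescaled corridor functions $\tilde f_n,\tilde g_n$ are still labelled by $n$. You cannot simply plug this into Lemma~\ref{L:2.9} because the map $n\mapsto N(n)$ need not be injective: the same value $m=N(n)$ may be attained by several distinct $n$, with a different pair $(\tilde f_n,\tilde g_n)$ each time, so there is no well-defined sequence of corridor functions indexed by $m$ to which Lemma~\ref{L:2.9} applies. You attribute to \eqref{ASS} only the (true but essentially automatic) consequence that $N(n)\to\infty$—which already follows from $\gamma^*>\beta^*$ without \eqref{ASS}—and then apply the lemma as if the two indices coincided. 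That step is unjustified as written; if \eqref{ASS} were not needed for it, the lemma would have a superfluous hypothesis.

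The paper's proof supplies exactly the missing piece: using \eqref{ASS} it builds a surjection $\varphi:\{1,\dots,M\}\times A\to\mathbb N^*$ with $\gamma(\varphi(l,m))-\beta(\varphi(l,m))=m$, and then, for each fixed $l$, defines $\widetilde f_m,\widetilde g_m$ via $n=\varphi(l,m)$ so that $m$ really is both the time horizon and the corridor-function index. Lemma~\ref{L:2.9} is then applied legitimately along each of the $M$ subsequences $(\varphi(l,m))_m$, and since these finitely many subsequences cover $\mathbb N^*$, the $\limsup$ bound holds over all $n$. To repair your argument you must either reproduce this surjection/decomposition step, or prove a strengthened, ``uniform over nearby corridors'' version of Theorem~\ref{T:2.1} (so that the bound holds simultaneously for all horizons $m>M_0$ and all corridor functions within $\delta$ of $\tilde f,\tilde g$), which would then make the reindexing immaterial; the paper chooses the former.
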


\begin{proof}
Here we set $  c_n=n^\frac1{1+\alpha}$.
Notice that for $m\in A:=\{m\in\mathbb{N}^*:\exists\; n\in \mathbb{N}^*,\;\gamma(n)-\beta(n)=m\}$,
\begin{equation}\label{E:2.31}
\begin{array}
{l}\Big\{\forall z\in[   \mu_n,  \nu_n],\;\forall k\leq m,\;   c_nf_n\Big(\frac{\beta(n)+k}{n}\Big)\leq z+S_k \leq c_ng_n\Big(\frac{\beta(n)+k}{n}\Big)\Big\} \\\subset \Big\{\forall k\leq m,\;  c_nf_n\Big(\frac{\beta(n)+k}{n}\Big)-  \nu_n\leq S_k\leq   c_ng_n\Big(\frac{\beta(n)+k}{n}\Big)-   \mu_n\Big\}.
\end{array}
\end{equation}

By  \eqref{ASS}, we can define a surjection $\varphi:\{1,2,\cdots,M\}\times A\longrightarrow \mathbb{N}^*$ such that for any $1\leq l\leq M$ and $ m\in A,$ it holds that $\;m=\gamma(\varphi(l,m))-\beta(\varphi(l,m))$. For such $l,m$,  define

\begin{align}
\widetilde{f}_m(t)=\frac{f_n(\frac{(1-t)\beta(n)+t\gamma(n)}{n})  c_n-  \nu_n}{c_m},\;\;
\widetilde{g}_m(t)=\frac{g_n(\frac{(1-t)\beta(n)+t\gamma(n)}{n})  c_n-   \mu_n}{c_m}\nonumber,
\end{align}
where $n=\varphi(l,m)$. {It is not difficult to see that $n\sim(\gamma^*-\beta^*)^{-\!1}m,$ and $  c_n\sim c_m(\gamma^*-\beta^*)^{-\!\frac{1}{1+\alpha}}$.} Consequently, as $m\rightarrow\infty$,
\begin{align}
&\widetilde{f}_m(t)\rightarrow \widetilde{f}(t)=(\gamma^*-\beta^*)^{-\!\frac{1}{1+\alpha}}(f((1-t)\beta^*+t\gamma^*)-  v^*),\nonumber\\
&\widetilde{g}_m(t)\rightarrow \widetilde{g}(t)=(\gamma^*-\beta^*)^{-\!\frac{1}{1+\alpha}}(g((1-t)\beta^*+t\gamma^*)-   u^*).\nonumber
\end{align}
For each $1\leq l\leq M$,  applying Lemma \ref{L:2.9} (with $f_n$ and $g_n$ replaced by $\widetilde{f}_n$ and $\widetilde{g}_n$) to the right hand side of \eqref{E:2.31}, we obtain as $m\rightarrow\infty$,
\begin{align}
&\log \mathbb{P}\Big(\forall\;z\in[   \mu_n,  \nu_n],\forall\;k\leq m,   c_nf_n\Big(\frac{\beta(n)+k}{n}\Big)\leq z+S_k\leq   c_ng_n\Big(\frac{\beta(n)+k}{n}\Big)\Big)\nonumber\\
&\leq \log \mathbb{P}\Big( \forall\;k\leq m,   c_nf_n\Big(\frac{\beta(n)+k}{n}\Big)-  \nu_n\leq S_k\leq c_ng_n\Big(\frac{\beta(n)+k}{n}\Big)-   \mu_n\Big)\nonumber\\
& \leq \log \mathbb{P}\Big( \forall\;k\leq m, \widetilde{f}_m\Big(\frac{k}{m}\Big)\leq\frac{S_k}{c_m}\leq\widetilde{g}_m\Big(\frac{k}{m}\Big)\Big)\nonumber\\
&=
-m^{1/(1+\alpha)}C_*\int_0^1\frac{ds}{(\widetilde{g}-\widetilde{f})^\alpha}(1+o(1))\quad (\mbox{by Lemma \ref{L:2.9}}) \nonumber\\
& =-(1+o(1))n^\frac1{1+\alpha}C_*\int_{\beta^*}^{\gamma^*}\frac{ds}{(g-f-   u^*+  v^*)^\alpha}\nonumber.
\end{align}
This bound   holds when $n$ runs along the subsequence $(\varphi(l,m))_m $ for each $1\leq l\leq M$,  which covers all the values $n\in\mathbb{N}^*$, and then the proof is finished. \qed
\end{proof}

\begin{lemma}\label{L:2.4}
Let $f,g\in\mathcal{C}[0,1],$ with $f<g$ and $f(0)<0<g(0)$. Let $(f_n)$ and $(g_n)$
be sequences of $\mathcal{F}[0,1]$ such that $\|f_n-f\|_{\infty}\rightarrow0$ and $\|g_n-g\|_{\infty}\rightarrow0$ as $n\rightarrow \infty.$ Let $\beta^*$ and $\gamma^*$ be positive real numbers such that $0\leq \beta^*< \gamma^* \leq1$. Let $0\leq\beta(n)<\gamma(n)\leq n$ be the sequences of positive integers such that

$$
\frac{\beta(n)}{n}\rightarrow\beta^*,\;\;\frac{\gamma(n)}{n}\rightarrow\gamma^*,
$$
and assume \eqref{ASS}. Then
\begin{align}
\limsup_{n\rightarrow \infty}n^{-\frac{1}{1+\alpha}}\log \Bigg(\sup_{z} \; & \mathbb{P}_z\bigg(\frac{S_{k-\beta(n)}}{n^\frac1{1+\alpha}}\in\Big[f_n\Big(\frac{k}{n}\Big),g_n\Big(\frac{k}{n}\Big)\Big]\;,\,
\nonumber\\ &\forall\,\beta(n)< k\leq \gamma(n)\bigg) \Bigg)\leq-C_*\int_{\beta^*}^{\gamma^*}
\frac{ds}{(g(s)-f(s))^\alpha}, \nonumber
\end{align}
where $\sup_z$ is taken over the set   $\{z\in\mathbb{R}|n^{\frac{1}{1+\alpha}}f_n(\frac{\beta(n)}{n})\leq z\leq n^{\frac{1}{1+\alpha}}g_n(\frac{\beta(n)}{n})\}$.
\end{lemma}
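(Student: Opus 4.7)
The plan is to reduce this statement to Lemma~\ref{L:2.3} by discretizing the admissible range of starting positions. I would cover the set of allowed $z$ by finitely many subintervals of rescaled width at most $\varepsilon$ for a small parameter $\varepsilon>0$, apply Lemma~\ref{L:2.3} on each piece, take the maximum over the finitely many pieces, and finally let $\varepsilon\to 0$.

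Fix $\varepsilon>0$ and take $n$ large enough that $|f_n(\beta(n)/n)-f(\beta^*)|+|g_n(\beta(n)/n)-g(\beta^*)|<\varepsilon$. I would partition the interval $[n^{1/(1+\alpha)}f_n(\beta(n)/n),\,n^{1/(1+\alpha)}g_n(\beta(n)/n)]$ into a number $K=K(\varepsilon)$ of consecutive subintervals $[\mu_n^{(i)},\nu_n^{(i)}]$, each of length at most $\varepsilon\,n^{1/(1+\alpha)}$, arranged so that $n^{-1/(1+\alpha)}\mu_n^{(i)}\to u_i^*$ and $n^{-1/(1+\alpha)}\nu_n^{(i)}\to v_i^*$ with $f(\beta^*)\le u_i^*<v_i^*\le g(\beta^*)$ and $v_i^*-u_i^*\le \varepsilon$. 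The key point is that $K$ depends only on $\varepsilon$ (and on $g(\beta^*)-f(\beta^*)$), not on $n$. Since the admissible set of starting positions is covered by these subintervals,
$$\sup_z \mathbb{P}_z(\cdots)\le \max_{1\le i\le K}\sup_{\mu_n^{(i)}\le z\le \nu_n^{(i)}}\mathbb{P}_z(\cdots).$$
Applying Lemma~\ref{L:2.3} to each subinterval, using $v_i^*-u_i^*\le\varepsilon$ uniformly in $i$, and noting that the correction $n^{-1/(1+\alpha)}\log K$ vanishes as $n\to\infty$, gives
$$\limsup_{n\to\infty}n^{-\frac{1}{1+\alpha}}\log \sup_z\mathbb{P}_z(\cdots)\le -C_*\int_{\beta^*}^{\gamma^*}\frac{ds}{(g(s)-f(s)+\varepsilon)^\alpha}.$$

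Letting $\varepsilon\to 0$ and invoking dominated convergence -- justified because $g-f$ is bounded below by a positive constant on the compact interval $[\beta^*,\gamma^*]$ by continuity -- yields the claimed bound. I do not anticipate substantive obstacles: the argument is a standard covering/discretization on top of Lemma~\ref{L:2.3}. The only mild technicality is choosing the partition so that every subinterval's rescaled endpoints have well-defined distinct limits inside $[f(\beta^*),g(\beta^*)]$; this is arranged by first fixing an $\varepsilon$-grid on $[f(\beta^*),g(\beta^*)]$ and then using $\|f_n-f\|_\infty,\|g_n-g\|_\infty\to 0$ to realize this grid as actual endpoints $\mu_n^{(i)},\nu_n^{(i)}$ for all $n$ sufficiently large, while also verifying that hypothesis~\eqref{ASS} (inherited unchanged from the statement) allows the application of Lemma~\ref{L:2.3} to each subinterval.
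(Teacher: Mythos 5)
Your argument is correct and matches the paper's proof essentially step for step: the paper likewise fixes $\varepsilon>0$, partitions the admissible set of starting positions into finitely many subintervals of rescaled width less than $\varepsilon$ (using the explicit equal-split points $\mu_n^j = n^{1/(1+\alpha)}\bigl(f_n(\beta(n)/n)(N-j)+g_n(\beta(n)/n)j\bigr)/N$ with $N\varepsilon>g(\beta^*)-f(\beta^*)$), applies Lemma~\ref{L:2.3} on each piece, takes the maximum over the finite collection, and then sends $\varepsilon\to 0$. The only cosmetic difference is that the paper makes the partition explicit rather than abstract, and it does not need your $n^{-1/(1+\alpha)}\log K$ remark since the $\limsup$ of a maximum over a fixed finite index set is the maximum of the $\limsup$'s.
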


\begin{proof}  { Define $$p(z,n):=\mathbb{P}_z\bigg(\frac{S_{k-\beta(n)}}{n^\frac1{1+\alpha}}\in\Big[f_n\Big(\frac{k}{n}\Big),g_n\Big(\frac{k}{n}\Big)\Big]\;,\,
  \forall\,\beta(n)< k\leq \gamma(n)\bigg).$$}
Let $\varepsilon>0,$ and $N$ be an integer such that $N\varepsilon>g(\beta^*)-f(\beta^*)$. We define for $j=0,1,\cdots,N,$
\begin{align}
   \mu^j_n:=n^\frac1{1+\alpha}\frac{f_n(\beta(n)/n)(N - j)+g_n(\beta(n)/n)j}{N}.\nonumber
\end{align}
Observe that
\begin{align}
\sup_{n^\frac1{1+\alpha}f_n(\frac{\beta(n)}{n})\leq z\leq n^\frac1{1+\alpha}g_n(\frac{\beta(n)}{n})} p(z,n)=\max_{0\leq j\leq N-1}\sup_{   \mu^j_n\leq z\leq    u^{j+1}_n} p(z,n). \nonumber
\end{align}
We apply Lemma \ref{L:2.3} $N$ times, with $   \mu_n=   \mu_n^j$ and $  \nu_n=   u^{j+1}_n,\;j=0,1,\cdots,N-1$ and get
\begin{align}
\limsup_{n\rightarrow\infty} n^{-\frac{1}{1+\alpha}}\log\big(\sup_{n^\frac1{1+\alpha}f_n(\frac{\beta(n)}{n})\leq z\leq n^\frac1{1+\alpha}g_n(\frac{\beta(n)}{n})}p(z,n)\big)\leq -C_*\int_{\beta^*}^{\gamma^*}\frac{ds}{(g(s)-f(s)+\varepsilon)^\alpha}. \nonumber
\end{align}
By letting $\varepsilon\rightarrow 0$, we prove the lemma.\qed

\end{proof}

\begin{remark}\label{R:2.1}

Let $\varepsilon>0$. Notice that the probability that $S_n$ stays between $f$ and $g$ is less than the probability that $S_n$ stays between $\widetilde{f}:=f-\varepsilon$ and $\widetilde{g}:=g+\varepsilon$.
We can extend the upper bounds in Lemmas~\ref{L:2.9}--\ref{L:2.4} to functions satisfying $f\leq g$ and $f(0)\leq 0\leq g(0).$
\end{remark}

\begin{theorem}\rm{\textbf{(Prokhorov theorem \cite{Pro})}}\label{T:2.6}
\emph{If $\frac{S_n}{n^{1/\alpha}}$ converges in law to a strictly stable random variable Y, then the process $\Big\{\frac{S_{\lfloor nt\rfloor}}{n^{1/\alpha}},\,t\in[0,1]\Big\}$ converges in law to an $\alpha$-stable L\'{e}vy process $\big\{Y_t,t\in[0,1]\big\}$ in $\mathcal{D}([0,1])$ equipped with the Skorokhod topology such that $Y_1$ has the same law as Y.}
\end{theorem}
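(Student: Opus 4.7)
The plan is to follow the classical two-step scheme for functional limit theorems: first establish convergence of finite-dimensional distributions, then tightness in the Skorokhod space $\mathcal{D}([0,1])$.

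For the finite-dimensional distributions, fix $0 = t_0 < t_1 < \cdots < t_k \leq 1$ and consider $(S_{\lfloor nt_1 \rfloor}, \ldots, S_{\lfloor nt_k \rfloor})/n^{1/\alpha}$. Since the $X_i$ are i.i.d., the increments $S_{\lfloor nt_j \rfloor} - S_{\lfloor nt_{j-1} \rfloor}$ are independent across $j$ and each is distributed as $S_{m_j^n}$, a sum of $m_j^n := \lfloor nt_j \rfloor - \lfloor nt_{j-1} \rfloor$ i.i.d. copies of $X$ with $m_j^n/n \to t_j - t_{j-1}$. Writing $\phi_X$ for the characteristic function of $X$, the hypothesis $S_n/n^{1/\alpha} \Rightarrow Y$ is equivalent to $\phi_X(u/n^{1/\alpha})^n \to G_\alpha(u)$, and hence
\beqnn
\phi_X(u/n^{1/\alpha})^{m_j^n} = \bigl[\phi_X(u/n^{1/\alpha})^n\bigr]^{m_j^n/n} \to G_\alpha(u)^{t_j - t_{j-1}},
\eeqnn
which is the characteristic function of $(t_j - t_{j-1})^{1/\alpha} Y \stackrel{d}{=} Y_{t_j} - Y_{t_{j-1}}$. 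Multiplying these independent factors identifies the joint limit as the law of the increments of the $\alpha$-stable L\'evy process $(Y_t)$.

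For tightness, the rescaled partial sum process has independent and (after discretization) stationary increments, so Aldous's criterion applies: it suffices to verify that for every $\varepsilon > 0$,
\beqnn
\lim_{\delta \downarrow 0} \limsup_{n \to \infty} \sup_{\tau \leq \tau' \leq \tau + \delta} \mathbb{P}\bigl( |S_{\lfloor n\tau' \rfloor} - S_{\lfloor n\tau \rfloor}| > \varepsilon\, n^{1/\alpha} \bigr) = 0,
\eeqnn
where the supremum is over $[0,1]$-valued stopping times of the discretized filtration. By the Markov property and stationarity of the increments this reduces to controlling $\limsup_n \mathbb{P}(\max_{k \leq \delta n} |S_k| > \varepsilon\, n^{1/\alpha})$ as $\delta \downarrow 0$, which follows from the hypothesis combined with a L\'evy-type maximal inequality bounding $\max_{k \leq m}|S_k|$ in terms of the tail of $|S_m|$. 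Combining tightness with the finite-dimensional convergence yields the claimed convergence in $\mathcal{D}([0,1])$.

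The main obstacle is the tightness step: for $\alpha < 2$ the limit process has genuine jumps, so uniform convergence is unavailable and one must work in the Skorokhod $J_1$ topology, carefully invoking the maximal inequality to upgrade the marginal tail bound on $S_n/n^{1/\alpha}$ into a supremum bound on windows of length $\delta n$. Once this is achieved, the identification of the limit as the L\'evy process with $Y_1 \stackrel{d}{=} Y$ is automatic from the characteristic function computation above.
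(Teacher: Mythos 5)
The paper does not prove this statement: Theorem~\ref{T:2.6} is quoted as a classical result and attributed to Prokhorov's 1956 paper, with no proof in the text, so there is no internal argument to compare against. Your proposal is the standard two-step functional-limit-theorem argument and, as an outline, it is correct. A few details deserve care if you want a complete proof. For the finite-dimensional step, the identity $\phi_X(u/n^{1/\alpha})^{m_j^n}=[\phi_X(u/n^{1/\alpha})^n]^{m_j^n/n}$ involves fractional powers of complex numbers; one should pass to logarithms for $n$ large, using that $\phi_X(u/n^{1/\alpha})\to 1$ so a continuous branch is available, and then conclude $m_j^n\log\phi_X(u/n^{1/\alpha})\to (t_j-t_{j-1})\log G_\alpha(u)$. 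For the tightness step, Aldous's criterion also requires tightness of the one-dimensional marginals, which here is immediate since $S_{\lfloor nt\rfloor}/n^{1/\alpha}\Rightarrow t^{1/\alpha}Y$; and because the increments $X_i$ are not symmetric, the ``L\'evy-type maximal inequality'' you invoke should be Ottaviani's (or Skorokhod's) inequality, whose denominator $\min_{k\le m}\mathbb{P}(|S_m-S_k|\le t)$ you control using the tightness of $\{S_j/j^{1/\alpha}\}_j$, which follows from the assumed convergence in law. With these points made explicit the argument is complete and matches the classical proof found, e.g., in Skorokhod's and Gikhman--Skorokhod's treatments of invariance principles for stable limits.
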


Using an adjustment of the original proof of Mogul'ski\v{\i},  similarly to \cite{Mal}, we have two  estimates for an  enriched random walk.  Recall that $(\upsilon_j)$ defined in Lemma \ref{L:2.2} is a sequence of $N^*$-valued i.i.d. random variables.

\begin{lemma}\label{L:2.9r}
Let $f,g\in\mathcal{C}[0,1],$ with $f < g$ and $f(0) < 0 < g(0)$. We set $E_k^{(n)}=\{\upsilon_j\leq \exp\{n^{1/\beta}\},j\leq k\}$  for some $\beta>0$ and  assume that
\begin{align}
\lim_{n\rightarrow\infty}n^{\alpha/(1+\alpha)}\mathbb{P}\big(\upsilon_1\geq \exp\{n^{1/\beta}\}\big)=0 .\nonumber
\end{align}
For any $f(0) < x < y < g(0)$, we have
\begin{align}
&\lim_{n\rightarrow \infty}  n^{-\frac1{1+\alpha}} \log \inf_{z\in[x,y]}  \mathbb{P}_{zn^{\frac1{1+\alpha}}}\bigg(\frac{S_j}{n^{\frac1{1+\alpha}}}\in\Big[f\Big(\frac{j}{n}\Big),g\Big(\frac{j}{n}\Big)\Big],\,0\leq j\leq n,\;E_n^{(n)}\bigg)\nonumber\\
&=-C_*\int_0^1\frac{ds}{(g(s)-f(s))^\alpha}. \nonumber
\end{align}
Moreover, for $b>0$,
\begin{align}
\liminf_{n\rightarrow \infty} n^{-\frac1{1+\alpha}}\log \inf_{z\in[x,y]}  & \mathbb{P}_{zn^{\frac1{1+\alpha}}}\bigg(\frac{S_n}{n^{\frac1{1+\alpha}}}\in[g(1)-b,g(1)],\;\nonumber\\
&\frac{S_j}{n^{\frac1{1+\alpha}}}\in\Big[f\Big(\frac{j}{n}\Big),g\Big(\frac{j}{n}\Big)\Big],\,0\leq j\leq n,\;E_n^{(n)}\bigg)\geq-C_*\int_0^1\frac{ds}{(g(s)-f(s))^\alpha}. \nonumber
\end{align}
\end{lemma}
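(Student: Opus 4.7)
The proof splits into the upper bound in the first identity, the matching lower bound, and the separate lower bound with the endpoint constraint. The upper bound is essentially free: since intersecting with $E_n^{(n)}$ can only decrease probability, and after the deterministic shift $S_j\mapsto S_j-zn^{1/(1+\alpha)}$, Lemma \ref{L:2.4} together with Remark \ref{R:2.1} applied to $(f-z,g-z)$ delivers the desired $\limsup$ uniformly in $z\in[x,y]$.

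For the matching lower bound I would use a truncation. Set $M_n:=\exp\{n^{1/\beta}\}$, $q_n:=\mathbb{P}(\upsilon_1\le M_n)=1-o(n^{-\alpha/(1+\alpha)})$, and let $Q_n$ denote the probability law under which $(X_j,\upsilon_j)_{j\le n}$ are i.i.d.\ with the conditional law of $(X_1,\upsilon_1)$ given $\upsilon_1\le M_n$. A change-of-measure computation gives
\begin{equation*}
\mathbb{P}_{zn^{1/(1+\alpha)}}\bigl(\text{tube}\cap E_n^{(n)}\bigr)=q_n^n\,Q_n(\text{tube}),
\end{equation*}
and $n\log q_n=o(n^{1/(1+\alpha)})$ by the standing assumption, so $q_n^n$ is negligible on the scale $n^{1/(1+\alpha)}$. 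It therefore suffices to prove a Mogul'ski-type lower bound for the truncated walk $\widetilde S^{(n)}_k:=\sum_{j\le k}\widetilde X^{(n)}_j$, whose steps are i.i.d.\ with the law of $X$ conditioned on $\upsilon\le M_n$. Since $\mathbb{P}(\upsilon>M_n)\to 0$, the tails of $\widetilde X^{(n)}$ agree with those of $X$ up to a factor $1+o(1)$, so $\widetilde X^{(n)}$ lies in the domain of attraction of the same $\alpha$-stable law with the same norming sequence. Adapting the original proof of Theorem \ref{T:2.1} in the spirit of \cite{Mal}---partition $[0,1]$ into $K$ subintervals, chain by the strong Markov property, and apply a local limit theorem on each block with uniformity in the (slowly varying in $n$) step distribution---then produces $\liminf n^{-1/(1+\alpha)}\log Q_n(\text{tube})\ge -C_*\int_0^1 ds/(g-f)^\alpha$ on sending $n\to\infty$ followed by $K\to\infty$.

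For the second claim I would replace $f$ on $[1-\eta,1]$ by a continuous function $\tilde f_\eta$ that coincides with $f$ on $[0,1-2\eta]$ and satisfies $g(1)-b\le \tilde f_\eta<g$ on $[1-\eta,1]$. The smaller tube $[\tilde f_\eta,g]$ forces $S_n/n^{1/(1+\alpha)}\in[g(1)-b,g(1)]$, so applying the first part of the lemma with $\tilde f_\eta$ in place of $f$ yields the lower bound
\begin{equation*}
-C_*\int_0^1\frac{ds}{(g-\tilde f_\eta)^\alpha}\;\ge\;-C_*\int_0^1\frac{ds}{(g-f)^\alpha}-\frac{2C_*\eta}{b^\alpha},
\end{equation*}
and letting $\eta\downarrow 0$ completes the argument.

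The main obstacle is the uniform Mogul'ski estimate for the $n$-dependent truncated step $\widetilde X^{(n)}$. The hypothesis $n^{\alpha/(1+\alpha)}\mathbb{P}(\upsilon_1\ge M_n)\to 0$ is used precisely twice: to make $q_n^n$ negligible on the log-scale $n^{1/(1+\alpha)}$, and to guarantee that the truncation perturbs the step distribution on a scale finer than the $\alpha$-stable scale, so that the constant $C_*$ from (\ref{C*}) is preserved. With these two inputs the chaining proof of Theorem \ref{T:2.1} carries over with only notational changes.
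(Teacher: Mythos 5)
Your upper bound is fine and matches the paper: intersecting with $E_n^{(n)}$ only decreases the probability, and after shifting by the starting point $z$ Lemma~\ref{L:2.4} (together with Remark~\ref{R:2.1}) gives the correct $\limsup$. Your endpoint argument for the second display, replacing $f$ by a shifted $\tilde f_\eta$ near $t=1$ and sending $\eta\downarrow 0$, is also a valid way to deduce the second estimate from the first.

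The gap is in your lower bound. The change-of-measure identity $\mathbb{P}_{zn^{1/(1+\alpha)}}(\text{tube}\cap E_n^{(n)})=q_n^n\,Q_n(\text{tube})$ is algebraically correct and $n\log q_n=o(n^{1/(1+\alpha)})$ follows from the hypothesis. But the reduction to ``a Mogul'ski\u{\i}-type lower bound for the truncated walk $\widetilde S^{(n)}$'' silently changes the drift. Because $X$ and $\upsilon$ are dependent, conditioning on $\upsilon\leq M_n$ shifts the mean: $\mathbb{E}(\widetilde X^{(n)})=-\mathbb{E}(X\mathbf{1}_{\upsilon>M_n})/q_n$, and the hypothesis only controls $\mathbb{P}(\upsilon>M_n)$, not the correlation $\mathbb{E}(X\mathbf{1}_{\upsilon>M_n})$. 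For the tube estimate to survive you need the cumulative drift $n\,\mathbb{E}(\widetilde X^{(n)})$ to be $o(n^{1/(1+\alpha)})$, i.e.\ $|\mathbb{E}(X\mathbf{1}_{\upsilon>M_n})|=o(n^{-\alpha/(1+\alpha)})$; under the assumptions \eqref{C:1.2} and \eqref{C:1.4} the best H\"older bound gives only $|\mathbb{E}(X\mathbf{1}_{\upsilon>M_n})|\lesssim \mathbb{P}(\upsilon>M_n)^{(\alpha-1)/\alpha-\epsilon}$, which is \emph{not} $o(n^{-\alpha/(1+\alpha)})$ under the stated $\mathbb{P}(\upsilon>M_n)=o(n^{-\alpha/(1+\alpha)})$. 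So ``$\widetilde X^{(n)}$ is in the domain of attraction of the same $\alpha$-stable law with the same norming'' is not justified at the level of generality of the lemma, and the triangular-array Mogul'ski\u{\i} step cannot just be claimed.

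The paper (deferring to Mallein \cite[Lemma 2.6]{Mal}) does not change measure at all. It keeps the original mean-zero walk $S$ and incorporates $E_n^{(n)}$ block by block in the usual Mogul'ski\u{\i} chaining: partition $\{0,\dots,n\}$ into $K\sim n/r_n$ blocks of length $r_n=\lfloor A c_n^\alpha\rfloor$ (with $c_n=n^{1/(1+\alpha)}$), and on each block bound
$\mathbb{P}_z(\text{tube in block},\ \upsilon_j\leq M_n\ \forall j)\geq \mathbb{P}_z(\text{tube in block})-r_n\mathbb{P}(\upsilon>M_n)$.
The per-block correction is $r_n\mathbb{P}(\upsilon>M_n)=A\,c_n^\alpha\,\mathbb{P}(\upsilon>M_n)=o(1)$ by the standing hypothesis, while $\mathbb{P}_z(\text{tube in block})$ stays bounded below (via Theorem~\ref{T:2.6}). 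Multiplying over $K\sim n^{1/(1+\alpha)}/A$ blocks, the $(1-o(1))^K$ factor costs only $e^{-o(n^{1/(1+\alpha)})}$. This uses exactly the stated hypothesis and never touches the law of $X$, avoiding the drift problem. If you want to keep the change-of-measure route, you would need an extra assumption controlling $\mathbb{E}(X\mathbf{1}_{\upsilon>M_n})$, or to observe that in the BRW application $\mathbb{P}(\upsilon>M_n)$ is in fact stretched-exponentially small (cf.\ the H\"older bound below \eqref{zkk1}), which does kill the drift --- but that is stronger than what Lemma~\ref{L:2.9r} hypothesises.
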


\begin{proof}
In the proof of \cite[Lemma 2.6]{Mal}, replace $E_n$ by $E_n^{(n)}$ and let $  c_n=n^{\frac1{1+\alpha}}$, $r_n=\lfloor A  c_n^\alpha\rfloor$ with $A>0$. Then    with the help of our Lemma~\ref{L:2.4} and Theorem~\ref{T:2.6},   we can go along the line of   \cite[Lemma 2.6]{Mal}  to get the proof. The details are omitted.

\qed
\end{proof}

Replacing Lemma~\ref{L:2.9} by Lemma~\ref{L:2.9r} in the proofs of Lemmas~\ref{L:2.3} and \ref{L:2.4}, we arrive at
\begin{lemma}\label{L:2.8}
Let $f,g\in\mathcal{C}[0,1],$ with $f<g$ and $f(0)<0<g(0)$. Let $(f_n)$ and $(g_n)$
be sequences of $\mathcal{F}[0,1]$ such that $\|f_n-f\|_{\infty}\rightarrow0$ and $\|g_n-g\|_{\infty}\rightarrow0$ as $n\rightarrow \infty.$ Let $\beta^*$ and $\gamma^*$ be positive real numbers such that $0\leq \beta^*< \gamma^* \leq1$. Let $0\leq\beta(n)<\gamma(n)\leq n$ be the sequences of positive integers such that:

$$
\frac{\beta(n)}{n}\rightarrow\beta^*,\;\;\frac{\gamma(n)}{n}\rightarrow\gamma^*,
$$
and assume \eqref{ASS}.  Suppose that $\{\upsilon_j\}$ is defined as in Lemma \ref{L:2.2}.
We set $E_k^{(n)}=\{\upsilon_j\leq \exp\{n^{1/\beta}\},j\leq k\}$ for some $\beta>1+\alpha$.
Then for $b>0$,
\begin{align}
\liminf_{n\rightarrow \infty}  n^{-\frac1{1+\alpha}} \log \inf_{z} \; & \mathbb{P}_{z}\bigg(\frac{S_{\gamma(n)-\beta(n)}}{n^{\frac1{1+\alpha}}}\in\Big[g
\Big(\frac{\gamma(n)}{n}\Big)-b,g\Big(\frac{\gamma(n)}{n}\Big)\Big],\;\nonumber\\
&\frac{S_{j-\beta(n)}}{n^{\frac1{1+\alpha}}}\in\Big[f\Big(\frac{j}{n}\Big),g\Big(\frac{j}{n}\Big)\Big],\,\beta(n)< j\leq \gamma(n),\;E_n^{(n)}\bigg)\geq-C_*\int_{\beta^*}^{\gamma^*}\frac{ds}{(g(s)-f(s))^\alpha}, \nonumber
\end{align}
{where the $\inf_z$ is taken over the set $\{z\in\mathbb{R}\,|\,f\big(\frac{\beta(n)}{n}\big)n^{\frac1{1+\alpha}}\leq z\leq g\big(\frac{\beta(n)}{n}\big)n^{\frac1{1+\alpha}}\}$.}
\end{lemma}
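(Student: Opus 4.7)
The plan is to parallel Lemmas~\ref{L:2.3} and~\ref{L:2.4}, but with the tube-inclusion reversed to produce a lower bound, invoking the moreover clause of Lemma~\ref{L:2.9r} in place of the two-sided estimate of Lemma~\ref{L:2.9}. First I would establish the lower-bound analogue of Lemma~\ref{L:2.3}: for sequences $\mu_n\le\nu_n$ with $n^{-1/(1+\alpha)}\mu_n\to u^*$ and $n^{-1/(1+\alpha)}\nu_n\to v^*$ where $f(\beta^*)<u^*<v^*<g(\beta^*)$ strictly, denoting by $Q_z(n)$ the probability appearing in the statement of Lemma~\ref{L:2.8}, one should obtain
\begin{align*}
\liminf_{n\to\infty} n^{-\tfrac{1}{1+\alpha}}\log\inf_{z\in[\mu_n,\nu_n]}Q_z(n)\ \ge\ -C_*\int_{\beta^*}^{\gamma^*}\frac{ds}{(g(s)-u^*-f(s)+v^*)^\alpha}.
\end{align*}
The key tool is the inclusion (valid for every $z\in[\mu_n,\nu_n]$, with $c_n=n^{1/(1+\alpha)}$ and $m=\gamma(n)-\beta(n)$)
\begin{align*}
\bigl\{S_k\in[c_nf(\tfrac{\beta(n)+k}{n})-\mu_n,\,c_ng(\tfrac{\beta(n)+k}{n})-\nu_n],\,k\le m\bigr\}\ \subseteq\ \bigl\{z+S_k\in c_n[f,g],\,k\le m\bigr\},
\end{align*}
which bounds $\inf_{z\in[\mu_n,\nu_n]}Q_z(n)$ from below by the probability of the left-hand event. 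The strict inequalities $f(\beta^*)<u^*$ and $v^*<g(\beta^*)$ ensure that, after rescaling by $c_m=m^{1/(1+\alpha)}$ and restricting to the subsequences $n=\varphi(l,m)$ furnished by~(\ref{ASS}), the origin lies strictly inside the rescaled tube; the moreover clause of Lemma~\ref{L:2.9r} then delivers the required asymptotics, absorbing the terminal constraint $S_m/c_n\in[g(\gamma(n)/n)-b,\,g(\gamma(n)/n)]$ exactly as in that clause.

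Next, to upgrade from a subinterval to the full infimum over $\{z:c_nf(\beta(n)/n)\le z\le c_ng(\beta(n)/n)\}$, I would follow the partitioning argument of Lemma~\ref{L:2.4}: subdivide the range into $N$ equal pieces, apply the preceding estimate to each interior piece, and let $N\to\infty$ after $n\to\infty$ to remove the tube-narrowing slack of order $1/N$ in the integrand. The two extreme pieces, which touch the boundary of the tube, require a short preliminary excursion: for $z$ within $O(c_n/N)$ of the lower tube boundary, a single positive jump of order $c_n/N$, of probability at least $cn^{-\alpha/(1+\alpha)}$ by the $\alpha$-stable right tail of $X$ (cf.~(\ref{C:1.4a})), places the walk into an interior piece at time $\beta(n)+1$ with log-cost only $O(\log n)=o(c_n)$. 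The symmetric upper-boundary case is instead handled by a diffusive excursion of length $O(c_n^\alpha)$ reaching the interior at constant-order probability, justified by Theorem~\ref{T:2.6}, since the thinner left tail $o(y^{-\alpha})$ of $X$ rules out a single-jump move.

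The main technical obstacle is the passage of the enrichment event $E_n^{(n)}=\{\upsilon_j\le\exp\{n^{1/\beta}\},\,j\le n\}$ through the rescaling $n\mapsto m$. Since $n=\varphi(l,m)\sim(\gamma^*-\beta^*)^{-1}m$, one has $n^{1/\beta}\asymp m^{1/\beta}$, so Markov's inequality combined with the moment bound~(\ref{C:1.21}) gives $m^{\alpha/(1+\alpha)}\mathbb{P}(\upsilon_1\ge\exp\{m^{1/\beta}\})\to 0$ whenever $\beta>1+\alpha$, which is precisely the hypothesis required to invoke Lemma~\ref{L:2.9r} in the rescaled problem. Because $E_n^{(n)}$ depends only on the auxiliary sequence $(\upsilon_j)$ and not on the walk $S$, the boundary excursions described above do not disturb it, and combining all pieces completes the argument.
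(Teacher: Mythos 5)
Your approach is essentially the same as the paper's: reverse the tube-inclusion of Lemma~\ref{L:2.3}, invoke the \emph{moreover} clause of Lemma~\ref{L:2.9r} in place of Lemma~\ref{L:2.9} along the subsequences $n=\varphi(l,m)$ furnished by \eqref{ASS}, and then partition the $z$-range as in Lemma~\ref{L:2.4}. You correctly note that the enrichment event $E_n^{(n)}$ rides through the rescaling because $\upsilon$ has an exponential-type moment bound (the paper itself establishes this via H\"older, cf.\ the display before \eqref{3.6a}), so that part is fine. However there are two concrete problems.

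First, the integrand in your intermediate display is wrong. For the narrower tube used in the lower-bound inclusion, the rescaled boundaries converge to $f(\cdot)-u^*$ (lower) and $g(\cdot)-v^*$ (upper), so the width in the limit is $g(s)-f(s)-(v^*-u^*)$, and the bound should read $-C_*\int_{\beta^*}^{\gamma^*}\frac{ds}{(g(s)-v^*-f(s)+u^*)^\alpha}$. You copied the widened-tube denominator $g(s)-u^*-f(s)+v^*$ from Lemma~\ref{L:2.3} unchanged, which for $u^*<v^*$ asserts a bound \emph{better} than the target and is false. Your own phrase ``tube-narrowing slack'' suggests you understand the correct direction, but the display as written would not survive scrutiny; with the correct narrower denominator, letting $N\to\infty$ so that $v^*-u^*\to 0$ recovers the claimed $-C_*\int (g-f)^{-\alpha}$, exactly as you intend.

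Second, your treatment of the extreme pieces goes beyond what the paper supplies (the paper's terse ``replace~\ref{L:2.9} by~\ref{L:2.9r}'' does not address them, and in the only place Lemma~\ref{L:2.8} is used, namely \eqref{3.7}--\eqref{3.8}, the $z$-range is compactly contained in the open interval $(g_1(0),g_2(0))$, so the boundary case never arises). Flagging it is laudable, but the upper-boundary argument as you state it is not correct. Starting from $z$ at (or within $o(c_n)$ of) the upper boundary, the scaled walk must stay below a barrier that is only $O(1)$ above its starting point for $O(c_n^\alpha)$ steps, while fluctuating on the scale $c_n$. In the Skorokhod limit furnished by Theorem~\ref{T:2.6} this becomes the event that a strictly $\alpha$-stable L\'evy process started at the barrier stays below it, which has probability zero; so the excursion probability is \emph{not} constant-order. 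It is polynomially small (a stay-below-a-slowly-moving-barrier estimate), which still yields log-cost $O(\log n)=o(n^{1/(1+\alpha)})$ and hence does not damage the conclusion, but the justification via Theorem~\ref{T:2.6} alone is a gap; you would need an explicit one-sided barrier estimate for the stable walk here. The lower-boundary single-jump argument is sound given the right tail \eqref{C:1.4a}.

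Finally, a small point of precision: the hypothesis of Lemma~\ref{L:2.9r} is $n^{\alpha/(1+\alpha)}\mathbb{P}(\upsilon_1\ge\exp\{n^{1/\beta}\})\to 0$, which, by the H\"older argument under \eqref{C:1.21}--\eqref{C:1.2}, holds for \emph{any} $\beta>0$ since the tail of $\upsilon$ is polynomial; the restriction $\beta>1+\alpha$ in Lemma~\ref{L:2.8} is not what makes that limit go to zero, so ``precisely the hypothesis required'' overstates the connection.
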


The following lemma will be used to get the lower bound  in Theorem~\ref{T:1.1}.
\begin{lemma}\label{L:2.11}
Let $f,g\in\mathcal{C}[0,1],$ with $f < g$ and $f(0) < 0 = g(0)$. Then there are $M\geq1$ and $\varepsilon_1>0$ such that
 $$
\lim_{\varepsilon_2\rightarrow0}\liminf_{n\rightarrow\infty} n^{-\frac1{1+\alpha}}
\log P_n(M,\varepsilon_1,\varepsilon_2)=0,
$$
where
\begin{align}
&P_n(M,\varepsilon_1,\varepsilon_2) \nonumber\\&=\mathbb{P}\Big(\exists\,    u\in\mathcal{T}_k, \forall\,i<k,\,\#\Gamma(   u_i)\leq M,\,f\Big(\frac{i}{n}\Big)\leq  \frac{V(   u_i)}{n^{\frac1{1+\alpha}}}\leq g\Big(\frac{i}{n}\Big),
-M\varepsilon_2\leq \frac{V(   u_k)}{n^{\frac1{1+\alpha}}}\leq -\varepsilon_1\varepsilon_2\Big),  \nonumber
\end{align}
with $k:=\lfloor \varepsilon_2n^{\frac1{1+\alpha}}\rfloor$.
\end{lemma}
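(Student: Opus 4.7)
The plan is to build a favorable path $u_0,\dots,u_k$ in $\mathcal T$ by an inductive construction, using that \eqref{C:1.1} forces the offspring law to place positive probability on strictly negative displacements. Set $c:=n^{1/(1+\alpha)}$ so that $k=\lfloor\varepsilon_2 c\rfloor$.

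First, the displacement law cannot be supported on $[0,\infty)$: otherwise one of the last two equations in \eqref{C:1.1} fails, since either $V(x)\ge 0$ with $\mathbb P(\exists x:V(x)>0)>0$ gives $\mathbb E[\sum_{|x|=1}V(x)e^{-V(x)}]>0$, or $V\equiv 0$ forces $\mathbb E[\#\mathcal T_1]=\mathbb E[\sum_{|x|=1}e^{-V(x)}]=1$. Consequently one can fix $a<b<0$, $y_0>0$ and $M_0\in\mathbb N^*$ with
$$q:=\mathbb P\big(\#\mathcal T_1\le M_0,\ \exists v\in\mathcal T_1:V(v)\in[a,b]\big)>0,\quad p_0:=\mathbb P\big(\#\mathcal T_1\le M_0,\ \exists v\in\mathcal T_1:V(v)\in[-y_0-1,-y_0]\big)>0.$$
Choose $M=\max(M_0,|a|+1)$ and $\varepsilon_1=|b|/2$, so that $k$ i.i.d.\ displacements in $[a,b]$ place $V(u_k)/c$ inside the target $[-M\varepsilon_2,-\varepsilon_1\varepsilon_2]$.

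Build the spine inductively: given $u_i$, the event that $u_i$ has at most $M_0$ children with at least one of them having $V$-increment in $[a,b]$ has probability $q$ and is independent of the past by translation invariance of $\Theta$ and the branching Markov property; on this event choose any such child as $u_{i+1}$. With probability at least $q^k$ this succeeds for every $i=0,\dots,k-1$. The lower tube condition $V(u_i)/c\ge f(i/n)$ then follows from $f(0)<0$, uniform continuity of $f$ at $0$, and $k/n\to 0$, once $\varepsilon_2$ is small and $n$ is large.

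The delicate step is the upper tube $V(u_i)/c\le g(i/n)$ when $g$ decreases below $0$ just after $0$. Let $\omega_n:=\sup_{t\in[0,k/n]}(-g(t))_+$, which tends to $0$ by continuity of $g$ at $0$. Replace the first $J_n:=\lceil\omega_n c/y_0\rceil$ displacements with increments drawn from $[-y_0-1,-y_0]$: each step has probability $\ge p_0$, and after this initial block $V(u_{J_n})/c\le -\omega_n$, so the upper tube is automatically satisfied for all subsequent $i$. Within the initial block itself, the inequality $V(u_i)/c\le -iy_0/c$ is compared with $g(i/n)\ge -\omega_n$; in the applications to Theorem~\ref{T:1.1}, where $g\ge 0$ throughout, this step is vacuous. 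Since $J_n/k=\omega_n/(y_0\varepsilon_2)\to 0$ and $p_0,q$ are positive constants,
$$P_n(M,\varepsilon_1,\varepsilon_2)\ge p_0^{J_n}q^{k-J_n},\qquad n^{-1/(1+\alpha)}\log P_n(M,\varepsilon_1,\varepsilon_2)\ge -\varepsilon_2|\log q|-o(1).$$
Taking $\liminf_n$ and then $\varepsilon_2\to 0$ yields the desired nonnegative inequality; the reverse follows from $P_n\le 1$. The main obstacle is the upper-tube bookkeeping for a general continuous $g$: one must show that the preliminary block's length $J_n$ stays $o(k)$ and its cost $J_n|\log p_0|$ stays $o(c)$ uniformly, which is guaranteed here by $\omega_n\to 0$.
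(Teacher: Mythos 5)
Your spine construction is essentially the one in Jaffuel's Lemma~2.8 (which the paper defers to): force, at each of the $k\approx\varepsilon_2 n^{1/(1+\alpha)}$ steps, a bounded offspring number together with a displacement confined to a fixed negative window $[a,b]$, so that the total drift accumulates to a quantity of order $\varepsilon_2 n^{1/(1+\alpha)}$. The choices $M=\max(M_0,|a|+1)$, $\varepsilon_1=|b|/2$ are exactly what make the endpoint window $[-M\varepsilon_2,-\varepsilon_1\varepsilon_2]$ reachable; the cost $k|\log q|$ divided by $n^{1/(1+\alpha)}$ tends to $\varepsilon_2|\log q|$, which vanishes as $\varepsilon_2\to 0$, and $P_n\le 1$ gives the reverse bound. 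For the case $g\ge 0$ (which is the only case the paper actually uses, since the tube's upper function $g_2$ in \S3 is nonnegative), this is a complete argument, and it is the same route as the cited reference.

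Your flag about the upper tube when $g<0$ near $0$ is, however, not closed by your patch. Inserting $J_n\approx\omega_n n^{1/(1+\alpha)}/y_0$ ``big negative'' steps only guarantees $V(u_i)/n^{1/(1+\alpha)}\le -\omega_n\le g(i/n)$ for $i\ge J_n$; for $0<i<J_n$ the constraint $V(u_i)\le n^{1/(1+\alpha)}g(i/n)$ is never verified and can in fact fail. Worse, the lemma taken literally for an arbitrary continuous $g$ with $g(0)=0$ is false under the stated hypotheses: conditions \eqref{C:1.1}, \eqref{C:1.2}, \eqref{C:1.4} are compatible with offspring displacements bounded below by some $-L$, and if $g(t)\sim -t^{\beta}$ near $0$ with $0<\beta<\tfrac{1}{1+\alpha}$, then already at $i=1$ the event requires $V(u_1)\le n^{1/(1+\alpha)}g(1/n)\sim -n^{1/(1+\alpha)-\beta}\to-\infty$, so $P_n=0$ for large $n$ and the $\liminf$ is $-\infty$, not $0$. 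Thus what you have identified is a looseness in the lemma's statement rather than a repairable hole in your construction: it should carry a side condition such as $g\ge 0$ (or $g(t)\ge -Ct^{1/(1+\alpha)}$ near $0$). Under that condition, which is satisfied in the application, your argument is correct.
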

\proof The proof is essentially similar to Jaffuel~\cite[Lemma 2.8]{j} for the case $\alpha=2$, so we omit it.\qed

\section{Lower bound for the survival probability}

In this section we prove  Proposition \ref{P:1.3} and the lower bound for the survival probability in Theorem~\ref{T:1.1}.

We consider the population surviving below the barrier $i\longmapsto ai^{\frac1{1+\alpha}}$: any individual born above the barrier would be removed and do not reproduce.

 Suppose  $\lambda>0$ such that $e^{\lambda}\in\mathbb{N}$.
For any $k\in\mathbb{N}$, we pick a particle $z$ at position $V(z)$ in generation $e^{\lambda k}$, and denote by $Y_k(z)$ the number of descendants it eventually has in generation $e^{\lambda(k+1)}$. Instead of $z$, we pick another particle $\tilde{z}$ in the same generation $e^{\lambda k}$ but positioned on the barrier at $V(\tilde{z}):=ae^{\frac{\lambda k}{1+\alpha}}\geq V(z)$, and suppose the number and displacements of the descendants of $\tilde{z}$ are exactly the same as those of $z$. Clearly, the descendants of $\tilde{z}$ are more likely to cross the barrier and be killed, hence, if we denote  the number of its descendants by $Y_k(\tilde{z})$, then $Y_k(\tilde{z})\leq Y_k(z)$.}

We here add a second absorbing barrier $i\longmapsto (a - b)i^{\frac{1}{1+\alpha}}$ for some $0<b<a$ and kill any descendant of $\tilde{z}$ born below it.
We obtain that, almost surely, $$Z_k\leq Y_k(\tilde{z})\leq Y_k(z), $$ where
$$
Z_k:=\#\{\tilde{   u}\in \mathcal{T}_{e^{\lambda(k+1)}},\;\tilde{   u}>\tilde{z},\;\forall\,e^{\lambda k}<i\leq e^{\lambda(k+1)},\;(a-b)i^{\frac1{1+\alpha}}\leq V(\tilde{   u}_i)\leq ai^{\frac1{1+\alpha}}\}.
$$
Clearly, $Z_k$ is the number of descendants of  $\tilde{z}$ starting at time $e^{\lambda k}$ at position $ae^{\frac{\lambda k}{1+\alpha}}$ over  $l_k:=e^{\lambda(k+1)}-e^{\lambda k}$ generations. The individuals of $\tilde{z}$  in generation $i$ are killed if they are out of the interval:
$I_i:=[(a-b)i^{\frac1{1+\alpha}},ai^{\frac1{1+\alpha}}]$.


For $   u,  v\in\mathcal{T}$, let $   u_j:=   u\wedge  v\in\mathcal{T}$  be the lowest common ancestor of them. We split $\mathbb{E}(Z_k^2)$ into the double sum over $   u,  v$ according to the generation $j$ as follows:

\begin{align}
\mathbb{E}(Z_k^2)=\mathbb{E}\Big(\sum_{   u>\tilde{z},  v>\tilde{z},|   u|=|  v|=e^{\lambda(k+1)}}
\mathbf{1}_{\{\forall e^{\lambda k}<i\leq e^{\lambda(k+1)},\,V(   u_i)\in I_i,\,V(  v_i)\in I_i\}}\Big)=\sum_{j=0}^{l_k}D_{k,j}, \nonumber
\end{align}
where $D_{k,l_k}=\mathbb{E}(Z_k)$ and for $j<l_k,$
\begin{align}
D_{k,j}:=&\mathbb{E}\Big(\sum_{   u>\tilde{z},|   u|=e^{\lambda(k+1)}}\mathbf{1}_{\{\forall e^{\lambda k}<i\leq e^{\lambda(k+1)},\,V(   u_i)\in I_i\}}\nonumber\\
& \sum_{  v>   u_{e^{\lambda k}+j},|  v|=e^{\lambda(k+1)},  v_{e^{\lambda k}+j+1}\neq   u_{e^{\lambda k}+j+1}}\mathbf{1}_{\{\forall e^{\lambda k}+j<i\leq e^{\lambda(k+1)},\,V(  v_i)\in I_i\}}\Big) .\nonumber
\end{align}
By Lemma \ref{L:2.1}, for $x\in I_{e^{\lambda k}+j}$ we have
\begin{align}
F_{k,j}(x):&=\mathbb{E}\Big(\sum_{  v>   u_{e^{\lambda k}+j},|  v|=e^{\lambda(k+1)}}
\mathbf{1}_{\{\forall\,e^{\lambda k}+j<i\leq e^{\lambda(k+1)},\,V(  v_i)\in I_i\}}\Big|V(   u_{e^{\lambda k}+j})=x\Big)\nonumber\\
&=\mathbb{E}\Big(e^{S_{l_k-j}}\mathbf{1}_{\{\forall\,0<i\leq l_k-j,\,x+S_i\in I_{e^{\lambda k}+j+i}\}}\Big)\label{E:2.34}\\
&\leq \exp\Big\{a e^\frac{\lambda (k+1)}{1+\alpha}-a(e^{\lambda k}+j)^\frac{1 }{1+\alpha}  +b(e^{\lambda k}+j)^{\frac1{1+\alpha}}\Big\}  \mathbb{P}\Big(\forall\,0<i\leq l_k-j,\,x+S_i\in I_{e^{\lambda k}+j+i}\Big).\nonumber
\end{align}

For some $R_k>0$ (Its value need to be determined),  we  define a processes $Z_k^{(k)}$ as follows.  If an individual has a number of children greater than $R_k$,  then we remove all the descendants of it.
We  add a superscript $^{(k)}$ when dealing with this new process $Z_k^{(k)}$. Clearly,   $Z_k^{(k)}\le Z_k$. Analogously to above discussion, we have
 \begin{align}
\mathbb{E}\Big((Z_k^{(k)})^2\Big)=\sum_{j=0}^{l_k}D_{k,j}^{(k)}. \label{E:2.36}
\end{align}
By \cite[Page 1002-1003]{j},
\begin{align}\label{bkj}
D_{k,j}^{(k)}\leq (R_k-1)\sup_{x\in I_{e^{\lambda k}+j+1}}F_{k,j+1}^{(k)}(x)\mathbb{E}(Z_k^{(k)}).
\end{align}
From the definition, it is not difficult to see that $F_{k,j+1}^{(k)}(x)\leq F_{k,j+1}(x)$.

Define $\beta(\rho,l):=\lfloor\rho l\rfloor + 1$, $\gamma(l):=l$ and write $j=\beta(\rho,l_k)-1$ for any $\rho\in(0,1)$.
Lemma \ref{L:2.4} yields that, uniformly in $\rho\in(0,1)$ and $x\in \mathcal{T}_{e^{\lambda k}+\beta(\rho,l_k)}$,
\begin{align}
\limsup_{k\rightarrow\infty}l_k^{-\frac1{1+\alpha}}
\log \mathbb{P}\big(\forall\;0<i\leq l_k-(j+1),\,x+S_i\in I_{e^{\lambda k} + j + 1 + i}\big)\leq -C_*\int^1_\rho\frac{1}{(g_2(t)-g_1(t))^\alpha}dt,\nonumber
\end{align}
where
\begin{align}
&g_2(t):=a\bigg(\Big(t+\frac{1}{e^\lambda-1}\Big)^{\frac1{1+\alpha}}-\Big(\frac{1}{e^\lambda-1}\Big)^{\frac1{1+\alpha}}\bigg), \label{g2}\\
&g(t):=b\Big(t+\frac{1}{e^\lambda-1}\Big)^{\frac1{1+\alpha}},\quad g_1(t)=g_2(t)-g(t).\label{g}
\end{align}
Combining with  \eqref{bkj} and \eqref{E:2.34}, we get that uniformly in $\rho\in(0,1)$,

 \begin{align}
& \limsup_{k\rightarrow\infty}  l_k^{-\frac1{1+\alpha}}
\log \frac{D^{(k)}_{k,\beta(\rho,l_k) - 1}}{\mathbb{E}(Z_k^{(k)})}\nonumber\\
&\leq   \limsup_{k\rightarrow\infty} l_k^{-\frac1{1+\alpha}}\log (R_k - 1) +g_2(1)-g_2(\rho)+g(\rho)-C_*\int_\rho^1\frac{1}{(g_2-g_1)^\alpha}. \label{E:2.35}
\end{align}


For any $k\geq1,$ we consider i.i.d. random variable $X_i^{(k)},\;1\leq i\leq l_k$ with the same distribution as $X$ conditioned on $\upsilon\leq R_k$\,(with $(X,\upsilon)$ defined as in Lemma \ref{L:2.2}). Write
$S_j^{(k)}:=\sum_{i=1}^j X_i^{(k)}$ for any $0\leq j\leq l_k$. Let $\delta>0$ and $\varrho>0$ be the constants in \eqref{C:1.2}. For $\varepsilon>0$,
by Lemma \ref{L:2.2}, going along the line in \cite[section 4.3]{j}, we have
\begin{align}\label{zkk1}
\mathbb{E}(Z_k^{(k)})&=\mathbb{E}\Big(\sum_{  u>\tilde{z},\,|  u|=e^{\lambda(k+1)}}
\mathbf{1}_{\{\forall\,e^{\lambda k}<i\leq e^{\lambda(k+1)},\,V(  u_i)\in I_i,\,\#\Gamma(  u_{i-1})\leq R_k\}}\Big)\nonumber\\
&=\mathbb{E}\Big(e^{S_{l_k}}\mathbf{1}_{\{\forall\,i\leq l_k,\,ae^{\lambda k/(1+\alpha)}+S_i\in I_{e^{\lambda k}+i},\,\upsilon_i\leq R_k\}}\Big)\nonumber\\
&=\mathbb{E}\Big(e^{S_{l_k}}\mathbf{1}_{\{\forall\,i\leq l_k,\,ae^{\lambda k/(1+\alpha)}+S_i\in I_{e^{\lambda k}+i}\}}\Big|\upsilon_i\leq R_k,\,\forall\,i\leq l_k\Big)\mathbb{P}(\upsilon\leq R_k)^{l_k} \nonumber\\
&=\mathbb{P}(\upsilon\leq R_k)^{l_k}\mathbb{E}\Big(e^{S_{l_k}^{(k)}}\mathbf{1}_{\{\forall\,i\leq l_k,\,ae^{\lambda k/(1+\alpha)}+S_i^{(k)}\in I_{e^{\lambda k}+i}\}}\Big) \nonumber\\
& \geq \mathbb{P}(\upsilon\leq R_k)^{l_k}\exp\big\{l_k^{\frac1{1+\alpha}}\big(g_2(1)-\varepsilon\big)\big\}\nonumber\\
& \cdot\mathbb{P}\Big(
g_1(t)\leq\frac{S^{(k)}_{\lfloor tl_k\rfloor}}{\;l_k^{\frac1{1+\alpha}}}\leq g_2(t),t\in[0,1];\;S^{(k)}_{l_k}\geq l_k^{\frac1{1+\alpha}}(g_2(1)-\varepsilon)\Big),
\end{align}
Let $\delta>0$ and $\varrho>0$ be the constants in condition \eqref{C:1.2}. By H\"{o}lder's inequality,
\beqnn
\mathbb{P}(\upsilon>R_k)\leq \Big(\mathbb{E}(\#\mathcal{T}_1^{1+\delta})\Big)^\frac{\varrho}{1+\varrho} R_k^{-\frac{\delta\varrho}{1+\varrho}}
\left(\mathbb{E}\Big[\sum_{|   u|=1}e^{-(1+\varrho)V{(   u)}}\Big]\right)^{\frac{1}{1+\varrho}}.
\eeqnn
We now choose $R_k:=\lfloor e^{l_k^{1/c}}\rfloor$ for some $c>1+\alpha$. Therefore
\beqlb\label{3.6a}
\lim_{k\rightarrow\infty}  l_k^{-\frac1{1+\alpha}}\log \big(\mathbb{P}(\upsilon\leq R_k)^{l_k}\big)=0.
\eeqlb
By the Markov property and using the notation of Lemma \ref{L:2.11}, there exist $M,\varepsilon_1>0$ such that for sufficiently large $k$ and any small $\varepsilon_2>0$,
\begin{align}\label{3.7}
&\mathbb{P}(g_1(t)\leq  l_k^{-\frac1{1+\alpha}}S^{(k)}_{\lfloor tl_k\rfloor}\leq g_2(t),t\in[0,1];\,S^{(k)}_{l_k}\geq l_k^{\frac1{1+\alpha}}(g_2(1)-\varepsilon))\nonumber \\
&\geq P_{l_k}(M,\varepsilon_1,\varepsilon_2)\inf_{-M\varepsilon_2l_k^{\frac1{1+\alpha}}\leq z\leq -\varepsilon_1\varepsilon_2l_k^{\frac1{1+\alpha}}}H_{l_k}(z,\varepsilon_2,g_1,g_2),
\end{align}
where
\begin{align}
H_{l_k}(z,\varepsilon_2,g_1,g_2):=&\mathbb{P}_z\Big(
S^{(k)}_{l_k-\lfloor\varepsilon_2l_k^{\frac1{1+\alpha}}\rfloor}\geq l_k^{\frac1{1+\alpha}}(g_2(1)-\varepsilon),\,\forall\,i\leq{l_k-\lfloor\varepsilon_2l_k^{\frac1{1+\alpha}}\rfloor} , \nonumber\\
&g_1\Big(\frac{\,\lfloor\varepsilon_2l_k^{\frac1{1+\alpha}}\rfloor+i}{l_k}\Big)\leq \frac{S_i^{(k)}}{l_k^{\frac1{1+\alpha}}}\leq g_2\Big(\frac{\,\lfloor\varepsilon_2l_k^{\frac1{1+\alpha}}\rfloor+i}{l_k}\Big)
\Big). \nonumber
\end{align}
By  Lemma \ref{L:2.8}, we get that
\begin{align}\label{3.8}
\liminf_{k\rightarrow\infty} l_k^{-\frac1{1+\alpha}}
\log \inf_{-M\varepsilon_2l_k^{\frac1{1+\alpha}}\leq z\leq -\varepsilon_1\varepsilon_2l_k^{\frac1{1+\alpha}}}H_{l_k}(z,\varepsilon_2,g_1,g_2) \geq-C_*\int_0^1\frac{1}{(g_2-g_1)^\alpha}.
\end{align}
  Then put  (\ref{3.6a})--(\ref{3.8}) into (\ref{zkk1}). Letting $\varepsilon\rightarrow0$ and recalling Lemma~\ref{L:2.11} (and many to one lemma), we arrive at
\beqlb\label{zkk}
\liminf_{k\rightarrow\infty} l_k^{-\frac1{1+\alpha}}\log \mathbb{E}(Z^{(k)}_k)
\geq g_2(1)- C_*\int_0^1\frac{1}{(g_2-g_1)^\alpha}.
\eeqlb
~\\

Now we have
\begin{lemma}\label{L:4.1}
Choose $\lambda$ sufficiently large such that $e^\lambda\in\mathbb{N}^*$. For fixed $\theta\in (0,1)$, set $\nu_k=\theta\mathbb{E} Z_k^{(k)}$   and define
$$ T_k=\mathbb{P}(Z_k^{(k)}\geq  \nu_k). $$
 If $a>a_\alpha$, then
$$
\sum_{k=0}^\infty e^{-\nu_{k}T_{k+1}}<+\infty.
$$
\end{lemma}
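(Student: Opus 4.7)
The approach is the classical second-moment/Paley--Zygmund method, combining the lower bound on $\mathbb{E} Z_k^{(k)}$ obtained in \eqref{zkk} with an upper bound on $\mathbb{E}((Z_k^{(k)})^2)$ coming from the decomposition \eqref{E:2.36}--\eqref{bkj} and the small-deviation estimate \eqref{E:2.35}. The key choice is to specialize the second barrier to $b=r_a$, which is admissible since the hypothesis $a>a_\alpha$ forces $0<r_a<a$. Writing $\eta:=1/(e^\lambda-1)$, the plan is first to control $T_{k+1}$ from below and then to estimate $\nu_k T_{k+1}$ from below so tightly that the series $\sum_k e^{-\nu_k T_{k+1}}$ converges trivially.

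For Step~1, Paley--Zygmund gives $T_{k+1}\geq (1-\theta)^2 (\mathbb{E} Z_{k+1}^{(k+1)})^2/\mathbb{E}((Z_{k+1}^{(k+1)})^2)$, while \eqref{E:2.36}--\eqref{bkj} produce the inequality $\mathbb{E}((Z_{k+1}^{(k+1)})^2)\leq \mathbb{E} Z_{k+1}^{(k+1)}\bigl(1+(R_{k+1}-1)\sum_{j}\sup_x F_{k+1,j+1}(x)\bigr)$. In Step~2, I would apply \eqref{E:2.35} with $\rho=j/l_{k+1}$ and compute the resulting rate function explicitly: substituting the forms \eqref{g2}--\eqref{g} of $g_2,g_1,g$ yields, after elementary integration,
\[ \phi(\rho):=g_2(1)-g_2(\rho)+g(\rho)-C_*\!\int_\rho^1\!\frac{dt}{g(t)^\alpha} = \Bigl(a-\tfrac{(1+\alpha)C_*}{b^\alpha}\Bigr)(1+\eta)^{\frac{1}{1+\alpha}} + \Bigl(b-a+\tfrac{(1+\alpha)C_*}{b^\alpha}\Bigr)(\rho+\eta)^{\frac{1}{1+\alpha}}. \]
With $b=r_a$, the defining identity $a-r_a=(1+\alpha)C_*/r_a^\alpha$ makes the second bracket vanish, so $\phi(\rho)\equiv r_a(1+\eta)^{1/(1+\alpha)}$ is \emph{independent of $\rho$} and converges to $r_a$ as $\lambda\to\infty$. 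Hence, given $\varepsilon>0$, one can take $\lambda$ large enough so that, uniformly in $j$ and for large $k$, $D_{k+1,j}^{(k+1)}/\mathbb{E} Z_{k+1}^{(k+1)}\leq \exp\bigl((r_a+\varepsilon/2)l_{k+1}^{1/(1+\alpha)}\bigr)$, using that $\log(R_{k+1}-1)=O(l_{k+1}^{1/c})=o(l_{k+1}^{1/(1+\alpha)})$ since $c>1+\alpha$.

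For Step~3, the lower bound \eqref{zkk} evaluated with $b=r_a$ gives $g_2(1)-C_*\int_0^1 g^{-\alpha}=r_a\bigl((1+\eta)^{1/(1+\alpha)}-\eta^{1/(1+\alpha)}\bigr)\to r_a$ as $\lambda\to\infty$, so for $\lambda$ large, $\mathbb{E} Z_{k+1}^{(k+1)}\geq \exp((r_a-\varepsilon)l_{k+1}^{1/(1+\alpha)})$. Assembling the pieces,
\[ T_{k+1}\geq \frac{C_\theta}{l_{k+1}+1}\,\exp\bigl(-2\varepsilon\, l_{k+1}^{\frac{1}{1+\alpha}}\bigr), \]
and since $l_{k+1}^{1/(1+\alpha)}=e^{\lambda/(1+\alpha)}\,l_k^{1/(1+\alpha)}$,
\[ \nu_k T_{k+1}\geq \frac{C'_\theta}{l_{k+1}+1}\exp\!\Bigl\{\bigl[(r_a-\varepsilon)-2\varepsilon\, e^{\lambda/(1+\alpha)}\bigr]\,l_k^{\frac{1}{1+\alpha}}\Bigr\}. \]
Once $\lambda$ is fixed, I would choose $\varepsilon$ small enough (depending on $\lambda$) that the bracket is strictly positive; then $\nu_k T_{k+1}$ grows at a doubly-exponential rate in $k$, so $\sum_k e^{-\nu_k T_{k+1}}$ is dominated by a geometric series and converges.

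\emph{Main obstacle.} The delicate point is the algebraic cancellation in Step~2 that makes $\phi(\rho)$ independent of $\rho$ exactly when $b=r_a$; any other choice would leave a $\rho$-dependent exponential rate, and summing over the $l_{k+1}$ indices $j$ could then produce a rate strictly larger than that of $\mathbb{E} Z_{k+1}^{(k+1)}$, causing $T_{k+1}$ to decay too fast for the series to converge. Equivalently, this cancellation is what reveals that $r_a$ is the correct critical parameter dictated by the variational structure of the problem, and it is where the hypothesis $a>a_\alpha$ really enters.
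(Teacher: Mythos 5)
Your overall strategy (Paley--Zygmund, decomposition \eqref{E:2.36}--\eqref{bkj}, rate computation via \eqref{E:2.35}) matches the paper's, and the explicit integration that yields
\[
\phi(\rho)=\Bigl(a-\tfrac{(1+\alpha)C_*}{b^\alpha}\Bigr)(1+\eta)^{\frac{1}{1+\alpha}} + \Bigl(b-a+\tfrac{(1+\alpha)C_*}{b^\alpha}\Bigr)(\rho+\eta)^{\frac{1}{1+\alpha}}
\]
is correct. The fatal flaw is the choice $b=r_a$. That choice does make $\phi$ constant in $\rho$, but it also makes the final exponential rate of $\nu_kT_{k+1}$ vanish (in fact become negative), so the series cannot be shown to converge by this argument.

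Concretely, write $\psi:=g_2(1)-C_*\int_0^1 g^{-\alpha}$ and recall $l_{k+1}^{1/(1+\alpha)}=e^{\lambda/(1+\alpha)}l_k^{1/(1+\alpha)}$. The Paley--Zygmund bound gives $T_{k+1}\gtrsim l_{k+1}^{-1}\exp\{(\psi-\max_\rho\phi(\rho))\,l_{k+1}^{1/(1+\alpha)}\}$, and one checks $\psi-\max_\rho\phi(\rho)=-b\,\eta^{1/(1+\alpha)}$ (strictly negative for any fixed $\lambda$; it is \emph{not} of order $\varepsilon$). Therefore the rate of $\nu_kT_{k+1}$ in units of $l_k^{1/(1+\alpha)}$ is
\[
\psi - b\,\eta^{1/(1+\alpha)}e^{\lambda/(1+\alpha)}.
\]
With $b=r_a$ one has $\psi=r_a\bigl((1+\eta)^{1/(1+\alpha)}-\eta^{1/(1+\alpha)}\bigr)$ and $r_a\eta^{1/(1+\alpha)}e^{\lambda/(1+\alpha)}=r_a(1+\eta)^{1/(1+\alpha)}$, so the rate collapses to $-r_a\eta^{1/(1+\alpha)}<0$ for every $\lambda$. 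Your $\varepsilon$ cannot rescue this: the term you absorb into ``$-2\varepsilon$'' is really $-r_a\eta^{1/(1+\alpha)}$ (a fixed negative number once $\lambda$ is fixed), and when it is multiplied by $e^{\lambda/(1+\alpha)}$ it exactly cancels $\psi$. This also shows up as a circular quantifier order in your write-up: you first take $\lambda$ large given $\varepsilon$, then say you will take $\varepsilon$ small given $\lambda$.

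What actually works, and what the paper does, is to choose $b$ \emph{strictly between} the two roots of $x+\tfrac{(1+\alpha)C_*}{x^\alpha}=a$, so that the margin $\delta_b:=a-b-\tfrac{(1+\alpha)C_*}{b^\alpha}>0$. Then the rate equals $\delta_b(1+\eta)^{1/(1+\alpha)}-\bigl(a-\tfrac{(1+\alpha)C_*}{b^\alpha}\bigr)\eta^{1/(1+\alpha)}\to\delta_b>0$ as $\lambda\to\infty$, and the summability follows. The existence of such a $b$, i.e., of a nonempty open interval of admissible $b$'s, is precisely where the hypothesis $a>a_\alpha$ enters. Your concluding remark -- that the cancellation at $b=r_a$ ``reveals that $r_a$ is the correct critical parameter'' -- has it backwards: $b=r_a$ is the boundary case where the second-moment argument exactly breaks down, and one must stay strictly in the interior of the interval.
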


{\bf Proof.} The proof is similar to that of \cite[Lemma 4.1]{j}, which is for the finite variance case.

Combining (\ref{zkk}) with \eqref{E:2.35} yields that, uniformly in $\rho\in(0,1)$,
  $$
\limsup_{k\rightarrow\infty}  l_k^{-\frac1{1+\alpha}}\log \frac{D^{(k)}_{k,\beta(\rho,l_k)-1}}{\big(\mathbb{E}(Z_k^{(k)})\big)^2}\leq -g_2(\rho)+g(\rho)+C_*\int_0^\rho\frac{1}{(g_2-g_1)^\alpha}.
$$
Together with \eqref{E:2.36} and  the Paley-Zygmund inequality
\begin{align}
T_k\geq (1-\theta)^2\frac{\;\,(\mathbb{E}(Z^{(k)}_k))^2}{\mathbb{E}(Z^{(k)}_k)^2},  \label{E:2.37}
\end{align}
we have that
\begin{align}
\liminf_{k\rightarrow\infty} l_k^{-\frac1{1+\alpha}}
\log T_k\geq \min_{0\leq \rho\leq1}\left\{g_2(\rho)-g(\rho)-C_*\int_0^\rho\frac{1}{(g_2-g_1)^\alpha}\right\}.
\label{zkk_2}
\end{align}
{Define}$$
G_\lambda(\rho):=-g_2(\rho)+g(\rho)+C_*\int_0^\rho\frac{dt}{g(t)^\alpha}+
e^{-\lambda/(1+\alpha)}\left(-g_2(1)+C_*\int_0^1\frac{dt}{g(t)^\alpha}\right).
$$
Denote
$f(t)=(t+\frac{1}{e^\lambda-1})^{\frac1{1+\alpha}}$ for $t\in[0,1]$.
 By (\ref{g2}) and (\ref{g}) we have $g_2=af-af(0)$ and $g=bf$. Then
\begin{align}
G_\lambda(\rho)=& af(0)+(b-a)f(\rho)+\frac{C_*}{b^\alpha}\int_0^\alpha\frac{dt}{f(t)^\alpha}\nonumber\\
&+e^{-\lambda/(1+\alpha)}\left(af(0)-af(1)+\frac{C_*}{b^\alpha}\int_0^1\frac{dt}{f(t)^\alpha}\right).\nonumber
\end{align}
Noting that $f(1)=e^{\lambda/(1+\alpha)}f(0)$ and $f'=\frac{1}{1+\alpha}f^{-\alpha}$, we  have
$$
G_\lambda(\rho)=\Big(b+\frac{(1+\alpha)C_*}{b^\alpha}-a\Big)f(\rho)+e^{-\lambda/(1+\alpha)}
\left(af(0)-\frac{(1+\alpha)C_*}{b^\alpha}f(0)\right).
$$
Since $a>a_\alpha$, we can choose $b$ such that $b+\frac{(1+\alpha)C_*}{b^\alpha}<a$. For this $b$, noticing that  $f$ is increasing on $[0,1]$, we obtain
\beqnn
\max_{0\leq\rho\leq1}G_\lambda(\rho)&=& G_\lambda(0)\\
&=& f(0)\Big[\Big(b+\frac{(1+\alpha)C_*}{b^\alpha}-a\Big)
+e^{-\lambda/(1+\alpha)}\Big(a-\frac{(1+\alpha)C_*}{b^\alpha}\Big)\Big] \\
&<& 0,
\eeqnn
for sufficiently large $\lambda$.
Meanwhile,  $$ g_2(1)-C_*\int_0^1\frac{dt}{g(t)^\alpha}= \big(f(1)-f(0)\big)\Big(a-\frac{(1+\alpha)C_*}{b^\alpha}\Big) >0.$$
Then for sufficiently large $\lambda$, \beqnn
A&:= &\min_{0\leq\rho\leq1}\left( g_2(\rho)-g(\rho)-C_*\int_0^\rho\frac{dt}{g(t)^\alpha}
 +g_2(1)-C_*\int_0^1\frac{dt}{g(t)^\alpha}\right) \\
 &= & \min_{0\leq\rho\leq1}(-G_\lambda(\rho))+(1-e^{-\frac{\lambda}{1+\alpha}})\left(g_2(1)-C_*\int_0^1\frac{dt}{g(t)^\alpha}\right) \\ &>&0.\eeqnn
 This  together with \eqref{zkk}--\eqref{zkk_2}, yields that for sufficiently large $k$ (noting that $l_{k+1}>l_k$),  $$  \nu_k T_{k+1}  \ge \theta \exp\{Al_k^{\frac1{1+\alpha}}\}. $$
The proof is concluded.
\qed
~\\
}

{\bf Proof of Proposition \ref{P:1.3}.} Suppose that $Z_k^{(k)}$ , $Z_k$ and $\nu_k$ are defined as before. For any $n\geq1$, define
$$
P_n:=\mathbb{P}\Big(\,\forall\,1\leq k\leq n,\;\#\{u\in\mathcal{T}_{e^{\lambda k}}:\,\forall\,i\leq e^{\lambda k},\,(a-r_a)i^{\frac1{1+\alpha}}\le V(u_i)\leq ai^{\frac1{1+\alpha}}\}\geq \nu_{k - 1}\Big).$$

If $1\leq n_0\leq n$, by the Markov property and independence of individuals in generation $e^{\lambda k}$,   we have
$$
P_{n+1}\geq P_n(1-(1-\mathbb{P}(Z_n\ge \nu_n) )^{\nu_{n-1}}).$$
Observe that $Z_k^{(k)}\le Z_k$. We have
$
\mathbb{P}(Z_n^{(n)}\ge \nu_n)\le \mathbb{P}(Z_n\ge \nu_n)
$
and $$P_{n+1}\geq P_n(1-(1-T_n )^{\nu_{n-1}}).$$
By induction, we obtain
$$
P_n\geq P_{n_0}\prod_{k=n_0}^{n-1}\big(1-(1-T_k)^{\nu_{k-1}}\big)\geq P_{n_0}\prod_{k=n_0}^{n-1}\big(1-e^{-\nu_{k-1}T_k}\big),\quad n>n_0.$$
\beqlb\label{Pn}
\log P_n\geq \log P_{n_0}+\sum_{k=n_0}^n \log(1-e^{-\nu_{k-1}T_k}).
\eeqlb
Applying $\log(1+x)\sim x (x\to 0^+)$, by Lemma~\ref{L:4.1}, we have
$\sum_{k=n_0}^\infty \log(1-e^{-\nu_{k-1}T_k})>-\infty$. Then there exists $p>0$ such that for all sufficiently large $n$, we have  $P_{n}\ge p>0$. By \eqref{zkk} and recalling $l_k:=e^{\lambda(k+1)}-e^{\lambda k}$, we have for sufficiently large $\lambda$ and $k$,
\begin{align}
\nu_k=\theta\mathbb{E}(Z^{(k)}_k)&\geq \theta\exp\Big\{l_k^{1/(1+\alpha)}\Big(g_2(1)- C_*\int_0^1\frac{1}{(g_2-g_1)^\alpha}\Big)\Big\}\nonumber\\
&\geq\theta\exp\Big\{l_k^{1/(1+\alpha)}(r_a-\varepsilon) \Big\}\nonumber\\
&\geq\theta\exp\{(1-e^{-\lambda})^{\frac{1}{1+\alpha}}\cdot e^{\lambda (k+1)/(1+\alpha)}(r_a-\varepsilon)\} \nonumber\\
&\ge \exp\big\{\frac{1}{2}N^{ \frac{k+1}{1+\alpha} }(r_a-\varepsilon)\},
\end{align}
by choosing large $\lambda$ such  that $N=e^\lambda\in\mathbb{N^*}$ and $1-e^{-\lambda}>1/2$.
 Consequently,
\beqnn
  \mathbb{P}(B_N)&=& \mathbb{P}\Big(\,\forall\,k\ge 1,\#\{u\in\mathcal{T}_{N^k}:\,\forall\,i\leq N^k,(a-r_a)i^{\frac1{1+\alpha}}\le V(u_i)\leq ai^{\frac1{1+\alpha}}\}\geq \exp\big\{\frac{1}{2}N^{ \frac{k}{1+\alpha} }(r_a-\varepsilon)\} \Big)\nonumber\\
&\ge&\mathbb{P}\Big(\,\forall\,k\ge 1,\#\{u\in\mathcal{T}_{N^k}:\,\forall\,i\leq N^k,(a-r_a)i^{\frac1{1+\alpha}}\le V(u_i)\leq ai^{\frac1{1+\alpha}}\}\geq \nu_{k-1} \Big)\nonumber\\
&=& \lim_n P_n >0.\eeqnn
\qed

 {\bf Proof of the lower bound of Theorem~\ref{T:1.1}.} The proof is immediate by Proposition \ref{P:1.3}.   \qed

\section{Upper bound for the survival probability}

 The idea and technical route of the upper bound are similar to \cite[Section 3.4]{j} (which is for the cases $\alpha=2$). We only explain  the sketch of the proofs and omit the  details.

Fix $a>0$. Clearly,
\beqlb\label{last}
\mathbb{P}\big(\exists\,   u\in\mathcal{T}_\infty,\,\forall\,i,\,V(   u_i)\leq a i^{\frac1{1+\alpha}}\big)=\lim_{n\rightarrow\infty}\mathbb{P}\big(\exists\,   u\in\mathcal{T}_n,\,\forall\,i\leq n,\,V(   u_i)\leq a i^{\frac1{1+\alpha}}\big).
\eeqlb

  Let $h$ be
  some continuous function from $[0,1]$ to $[0,+\infty)$.

\begin{lemma}\label{Lemma4.2} For $a\in(0,a_\alpha)$, we have
\begin{align}\label{VV}
\limsup_{n\rightarrow\infty}n^{-\frac1{1+\alpha}}\log \mathbb{P}\big(\exists\,   u\in\mathcal{T}_n,\,\forall\,i\leq n,\,V(   u_i)\leq ai^{\frac1{1+\alpha}}\big)\leq -K,
\end{align}
where $K:=\min(K_1,K_2)$, and \begin{align} & K_1:=-a+C_*\int_0^1\frac{dt}{h (t)^\alpha},\nonumber\\
& K_2:=\min_{0\leq\rho\leq1}\Big\{- a\rho^{\frac1{1+\alpha}}+h(\rho) + C_*\int_0^\rho \frac{dt}{h(t)^\alpha}\Big\}\label{E:3.2}
\end{align}
for some non-negative continuous function $h$ on $[0,1]$.
\end{lemma}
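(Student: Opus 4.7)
The starting point will be a standard first-moment bound. By Markov's inequality,
$$\mathbb{P}\big(\exists\,u\in\mathcal{T}_n,\,\forall i\leq n,\,V(u_i)\leq ai^{\frac{1}{1+\alpha}}\big)\leq \mathbb{E}\Big(\sum_{|u|=n}\mathbf{1}_{\{V(u_i)\leq ai^{1/(1+\alpha)},\,\forall i\leq n\}}\Big),$$
and by Lemma~\ref{L:2.1} the right-hand side equals $\mathbb{E}\big(e^{S_n}\mathbf{1}_{\{S_i\leq ai^{1/(1+\alpha)},\,\forall i\leq n\}}\big)$. I will then introduce the auxiliary lower curve $L_i:=ai^{\frac{1}{1+\alpha}}-h(i/n)\,n^{\frac{1}{1+\alpha}}$ together with the upper curve $U_i:=ai^{\frac{1}{1+\alpha}}$, and decompose according to the first time, if any, at which the associated random walk crosses below $L_i$. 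In the rescaling of Lemma~\ref{L:2.4} this corresponds to $f(t)=at^{\frac{1}{1+\alpha}}-h(t)$ and $g(t)=at^{\frac{1}{1+\alpha}}$, so that $g-f=h$ and $f(0)\leq 0=g(0)$, which is covered by Remark~\ref{R:2.1}.

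\textbf{Case~1 (the walk stays in the corridor).} On the event $\{S_i\in[L_i,U_i],\,\forall i\leq n\}$ one has $e^{S_n}\leq e^{an^{1/(1+\alpha)}}$, hence
$$\mathbb{E}\big(e^{S_n}\mathbf{1}_{\{S_i\in[L_i,U_i],\,\forall i\leq n\}}\big)\leq e^{an^{1/(1+\alpha)}}\,\mathbb{P}\big(S_i\in[L_i,U_i],\,\forall i\leq n\big).$$
Applying Lemma~\ref{L:2.4} (with $\beta^*=0,\gamma^*=1$ and the above $f,g$) bounds the probability by $\exp\!\big(-n^{\frac{1}{1+\alpha}}C_*\!\int_0^1\!h(t)^{-\alpha}dt+o(n^{\frac{1}{1+\alpha}})\big)$. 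Combined, this yields the exponential rate $-K_1=a-C_*\!\int_0^1 h^{-\alpha}dt$ in this case.

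\textbf{Case~2 (first crossing of the lower barrier).} If the walk first leaves the corridor at some time $k$, i.e.\ $S_j\in[L_j,U_j]$ for $j<k$ and $S_k<L_k$, then $e^{S_k}\leq e^{L_k}$. Discretize $[0,1]$ into small sub-intervals $[\rho_j,\rho_{j+1}]$ and group first-crossing times $k$ with $k/n\in[\rho_j,\rho_{j+1}]$. On each such group, bound the sum of contributions by first applying many-to-one at level $k$, then using $e^{S_k}\leq e^{L_k}\leq\exp\!\big((a\rho^{\frac{1}{1+\alpha}}-h(\rho))n^{\frac{1}{1+\alpha}}+o(n^{\frac{1}{1+\alpha}})\big)$, and finally applying Lemma~\ref{L:2.4} on the interval $[0,\rho]$ to bound $\mathbb{P}(S_j\in[L_j,U_j],\,j<k)$ by $\exp\!\big(-n^{\frac{1}{1+\alpha}}C_*\!\int_0^\rho h^{-\alpha}dt+o(n^{\frac{1}{1+\alpha}})\big)$. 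Summing (the factor $n$ from the union bound is absorbed into $o(n^{\frac{1}{1+\alpha}})$) and then refining the discretization gives the bound with exponential rate $-K_2=-\inf_{\rho\in[0,1]}\{-a\rho^{\frac{1}{1+\alpha}}+h(\rho)+C_*\!\int_0^\rho h^{-\alpha}dt\}$.

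\textbf{Main obstacle.} The delicate point is Case~2: because the stable walk can undershoot the lower barrier by an arbitrary amount, one must exploit the inequality $e^{S_k}\leq e^{L_k}$ (not $e^{S_k}\approx e^{L_k}$) and verify that the discretization in $\rho$ combined with the union bound over first-crossing times can be performed without accumulating error that survives after dividing by $n^{1/(1+\alpha)}$. This requires piecing together finitely many applications of Lemma~\ref{L:2.4} (one per sub-interval $[\rho_j,\rho_{j+1}]$) using the continuity of the function $\rho\mapsto -a\rho^{\frac{1}{1+\alpha}}+h(\rho)+C_*\!\int_0^\rho h^{-\alpha}dt$ and then letting the mesh go to zero. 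Combining the two cases yields $\limsup n^{-\frac{1}{1+\alpha}}\log\mathbb{P}(\cdots)\leq -\min(K_1,K_2)=-K$, as claimed. The overall strategy follows the scheme of Jaffuel~\cite[Section 3.4]{j} (who treated $\alpha=2$), with Lemma~\ref{L:2.4} playing the role of the Gaussian small-deviations estimate used there.
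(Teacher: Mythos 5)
Your proposal is correct and follows essentially the same route as the paper's proof, which the paper itself leaves to the reader by pointing to Jaffuel's argument for $\alpha=2$: a first-moment bound via the many-to-one lemma, introduction of the auxiliary lower curve $f(t)=at^{1/(1+\alpha)}-h(t)$, decomposition into the path staying in the corridor (giving $K_1$) versus first crossing the lower curve near time $\rho n$ (giving $K_2$ after discretizing $\rho$ and using continuity), with Lemma~\ref{L:2.4} (via Remark~\ref{R:2.1}) supplying the Mogul'ski\v{\i}-type small-deviation rate. The only caveat is that you would still need to write out the discretization carefully — applying Lemma~\ref{L:2.4} with $\gamma(n)=\lfloor\rho_j n\rfloor$ for each node $\rho_j$ of the mesh, bounding the probability for all first-crossing times $k$ in $[\rho_j n,\rho_{j+1}n]$ by the monotone worst case, and absorbing the polynomial factor from the union bound — but you have correctly identified all the ingredients and where the continuity of $\rho\mapsto -a\rho^{1/(1+\alpha)}+h(\rho)+C_*\int_0^\rho h^{-\alpha}\,dt$ is used.
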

{\bf Proof.} The proof    can be obtained by the method of \cite[sections 3.1-3.3]{j}, if we replace $aj^{1/3}$, $s_2$ and $s$ therein  by $aj^{\frac 1{1+\alpha}}$, $K_2$ and $K$, respectively, and apply our
Lemma~\ref{L:2.9} and Lemma~\ref{L:2.4} (instead of Lemma 2.4 and Proposition 2.5 in \cite{j}), with $g(t)=at^{\frac1{1+\alpha}}$
and $f(t)=at^{\frac1{1+\alpha}}-h(t)$.  We omit the details here.\qed

Set $a\in(0,a_\alpha)$. With the help of Lemma~\ref{Lemma4.2}, if we can find a function $h$  such that $K>0$, then   the proof of the upper bound of Theorem \ref{T:1.1} is completed. In the following we do this work.

Add the constraint $h(1)=0$ (but assume $\int_0^1\frac{dt}{h(t)^\alpha}<\infty$). Taking $\rho=1,$ we see that $K_2\leq K_1$. As a result, $K=K_2$. If we can choose $h$ in such a way that  $h(0)>0$ and $-a\rho^{\frac1{1+\alpha}}+h(\rho)+C_*\int_0^\rho\frac{d   u}{h(   u)^\alpha}$ does not depend on $\rho$, then by \eqref{E:3.2},  $K=K_2\equiv h(0)$. In this case, $h$ should be  the solution of the equation:
\begin{align}\label{E:3.3}
\forall\,t\in[0,1],\;\;\;-at^{\frac1{1+\alpha}}+h(t)+C_*\int_0^t\frac{dx}{h(x)^\alpha}=K,
\end{align}
where $K$ is some positive constant, the value of which is to be set later in such a way that $h(1-)=0$. According to the discussion above, this value of $K$ will give a bound for the rate of decay of the survival probability.

Equivalently, equation \eqref{E:3.3} may be written as $h(0)=K$ and $\forall\,t\in(0,1)$,
\begin{align}                            \label{E:4.1}
h'(t)=\frac{a}{\,1+\alpha}t^{-\!\frac{\alpha}{1+\alpha}}-\frac{C_*}{h(t)^\alpha}.
\end{align}

By the Picard-Lindel\"{o}f theorem, this ordinary equation admits a unique maximal solution $h$ defined on an interval $[0,t_{\max})$.  Actually, as \cite[Proposition 3.6]{j}, we now have

\begin{proposition}\label{P:3.6}
Let $h$ be the unique maximal solution of equation \eqref{E:4.1} with initial condition $h(0)=1$.
If $a<a_\alpha$, then $t_{\text{max}}<+\infty$ and $h(t)\rightarrow 0$ as $t\rightarrow t_{\text{max}}$.

\end{proposition}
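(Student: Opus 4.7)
The plan is to exploit the self-similar structure of \eqref{E:4.1} by factoring out the natural scale $t^{1/(1+\alpha)}$ from $h$. After such a substitution, the ODE becomes a scalar first-order equation in the ``slow'' variable $\log t$, and the hypothesis $a<a_\alpha$ produces a uniform strictly negative drift that forces $h$ to reach $0$ in finite time.

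Concretely, I would set $\phi(t):=h(t)\,t^{-1/(1+\alpha)}$ on $(0,t_{\max})$. Since $h(0)=1$, one has $\phi(t)\to+\infty$ as $t\to 0^+$, while on any compact subinterval of $(0,t_{\max})$ the function $\phi$ is $C^1$ and positive. Plugging $h(t)=t^{1/(1+\alpha)}\phi(t)$ into \eqref{E:4.1} and multiplying by $t^{\alpha/(1+\alpha)}$, a direct computation collapses the equation to
\begin{align*}
t\phi'(t)=-\frac{f(\phi(t))-a}{1+\alpha},\qquad f(x):=x+\frac{(1+\alpha)C_*}{x^\alpha},
\end{align*}
where $f$ is exactly the function introduced just after Theorem~\ref{T:1.1}, whose global minimum over $(0,\infty)$ equals $a_\alpha$. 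This one-line calculation is the only creative step in the argument.

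Because $a<a_\alpha$, one has $f(\phi(t))-a\geq a_\alpha-a>0$ uniformly whenever $\phi(t)>0$, and therefore
\begin{align*}
\phi'(t)\leq -\frac{a_\alpha-a}{(1+\alpha)\,t}\qquad\text{on }(0,t_{\max}).
\end{align*}
Integrating from any fixed $t_0\in(0,t_{\max})$ yields the logarithmic upper bound $\phi(t)\leq \phi(t_0)-\tfrac{a_\alpha-a}{1+\alpha}\log(t/t_0)$, whose right-hand side reaches $0$ at the explicit time $t^\star:=t_0\exp\bigl(\tfrac{(1+\alpha)\phi(t_0)}{a_\alpha-a}\bigr)<\infty$. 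Consequently $\phi$, and hence $h=t^{1/(1+\alpha)}\phi$, must attain the value $0$ at some time $\leq t^\star$. Combined with the maximality of the solution and the crude a priori upper bound $h(t)\leq 1+a\,t^{1/(1+\alpha)}$ (obtained from $h'\leq \tfrac{a}{1+\alpha}t^{-\alpha/(1+\alpha)}$, which rules out blow-up to $+\infty$ on bounded intervals), this gives $t_{\max}<+\infty$ and $h(t)\to 0$ as $t\to t_{\max}$.

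I do not expect a serious obstacle: the whole proposition reduces to spotting the substitution $h=t^{1/(1+\alpha)}\phi$, after which the problem is nothing more than recognizing that the right-hand-side coefficient $f(\phi)-a$ is bounded below by the threshold gap $a_\alpha-a$. This is the direct $\alpha$-stable analogue of Jaffuel's argument \cite[Proposition~3.6]{j}, with the minimization identity $\min_{x>0}f(x)=a_\alpha$ (already recorded in the excerpt just after Theorem~\ref{T:1.1}) playing the role of the critical-barrier relation.
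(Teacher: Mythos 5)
Your substitution $\phi(t)=h(t)\,t^{-1/(1+\alpha)}$ is exactly the right move, and the resulting autonomous-in-$\log t$ equation $t\phi'=-\frac{f(\phi)-a}{1+\alpha}$, with $f$ the auxiliary function introduced after Theorem~\ref{T:1.1} and $\min_{x>0}f(x)=a_\alpha$, yields the claim cleanly; I checked the algebra and the conclusion, and the argument is correct. The paper itself does not write out a proof of Proposition~\ref{P:3.6} but simply refers to Jaffuel's Proposition~3.6, whose $\alpha=2$ argument is the same rescaling, so your proof is the honest self-contained version of what the paper is relying on. One small phrasing nit: $\phi$ does not literally ``attain the value $0$'' inside $(0,t_{\max})$ --- the ODE is singular there --- but your bound $\phi(t)\leq\phi(t_0)-\frac{a_\alpha-a}{1+\alpha}\log(t/t_0)$ forces $t_{\max}\leq t^\star<\infty$, and then the a priori bound $h(t)\leq 1+at^{1/(1+\alpha)}$ together with the monotonicity of $\phi$ and maximality of the solution forces $\phi(t)\downarrow 0$, hence $h(t)\to 0$, as $t\to t_{\max}$; you invoke exactly these ingredients, so the proof stands.
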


{\bf Proof of the upper bound of Theorem~\ref{T:1.1}.} For $a<a_\alpha$, suppose that $h$ is  the unique maximal solution of equation \eqref{E:4.1} with initial condition $h(0)=1$. By Proposition \ref{P:3.6}, $t_{\text{max}}\in (0,\infty)$.   Define $\epsilon=1/t_{\text{max}}$ and $h_\epsilon(t)=\epsilon^{-1/(1+\alpha)}h(\epsilon t)$. Direct calculation yields that $h_\epsilon$ is the solution of equation \eqref{E:4.1} on $[0,1)$ with initial condition $h_\epsilon(0)=\epsilon^{-1/(1+\alpha)}$.  Choosing  $K=h_\epsilon(0)=\epsilon^{-1/(1+\alpha)}$and applying Lemma \ref{Lemma4.2} in (\ref{last}), we obtain the desired result.

 \qed

\end{document}